

\documentclass[11pt]{article} 

\usepackage[utf8]{inputenc} 
\usepackage{amsmath, amsthm, amssymb}

\usepackage{geometry} 
\geometry{a4paper} 

\usepackage{graphicx} 


\usepackage[usenames,dvipsnames,svgnames,table]{xcolor}

\usepackage{fancyhdr} 
\pagestyle{fancy} 
\lhead{}\chead{}\rhead{}
\lfoot{}\cfoot{\thepage}\rfoot{}

\newcommand{\su}{\textswab{s:u(2)}}
\usepackage{sectsty}
\allsectionsfont{\sffamily\mdseries\upshape} 



\usepackage{verbatim} 

\usepackage{yfonts}[1998/10/03]

\newtheorem{thm}{Theorem}[section]
\newtheorem{cor}[thm]{Corollary}
\newtheorem{lem}[thm]{Lemma}

\newtheorem{prop}[thm]{Proposition}

\theoremstyle{definition}

\theoremstyle{definition}
\newtheorem{defn}[thm]{Definition}
\theoremstyle{remark}
\newtheorem{rem}[thm]{Remark}

\newtheorem{thm*}[thm]{Theorem}

\DeclareMathOperator{\spann}{span}
\DeclareMathOperator{\sgn}{sgn}
\DeclareMathOperator{\tr}{Tr}


\title{\sc{$SU_{q}(2)$ Representation Theory}}

\author{Olof Giselsson}

\begin{document}

\maketitle


\section{$SU(2)$ as a Lie group}
We will start this by looking at the Lie group $SU(2),$ and then to show how you get the irreducible representations of the corresponding Lie algebra $\su.$
This is to, hopefully, give some background to the reader (and the autor) so that the similarities with the general CQG (compact quantum group) case shows a bit. If this is familiar stuff, be my guest and skip it.
\\

Recall that the complex Lie group $SU(2)$ is a matrix subgroup of $M_{2}(\mathbb{C}),$ consisting of all matrices $$\left[ \begin{array}{ccc}
			\alpha & -\bar{\beta}  \\
			\beta & \bar{\alpha}  
			\end{array} \right]$$ such that $|\alpha|^{2}+|\beta|^{2}=1.$ It is not hard to see that $SU(2)\cong S^{3}$ as Lie groups, where $S^{3}$ (unit ball in $\mathbb{R}^{4}$) has the quaternion group structure, and hence $SU(2)$ is a compact Lie group. The complexified Lie algebra associated to $SU(2)$ will be denoted by $\su$ and is a $3$ dimensional vector space over $\mathbb{C}$ generated by three vectors ${h,e,f}$ subject to the conditions 
\begin{equation}\label{su1}
\begin{array}{ccc}
[h,e]=2 e & [f,h]=2f & [f,e]= h
\end{array}.
\end{equation}
When determining the irreducible representations of $SU(2)$ (and Lie groups in general) it is well advised to look at the irreducible representations for the Lie algebra instead, since these are in one to one correspondence with each other (in fact they are the 'same', as the Lie algebra is seen as "infinitesimal" actions by the group). An representation of a Lie algebra $A$ on a complex vector space $V$ is a linear map $\phi:A\rightarrow GL(V),$ such that $\phi([a,b])=[\phi(a),\phi(b)]$ for all $a,b\in A,$ where the first bracket is the Lie bracket in $A$ and the second is the commutator in $GL(V).$
\\

As $SU(2)$ is compact, we know that every irreducible representation is finite dimensional. Hence we need to analyze the situation when we have $f,e,h\in GL(V)$ satisfying~\eqref{su1} and these are irreducible (no nontrivial subspace is invariant for all of them). Fortunately, this is not very hard. As $V$ is complex, we know that $h$ has at least one eigenvector $v$ with eigenvalue $\lambda.$ We claim that, if non-zero, also $ev$ and $fv$ are eigenvectors for $h,$ with eigenvalue $\lambda +2$ and $\lambda-2$ respectively. Indeed, a calculation gives $$
\begin{array}{ccc}
h(ev)=ehv+[h,e]v=\lambda ev+2ev=(\lambda+2)ev \\
h(fv)=fhv-[f,h]v=\lambda fv-2fv=(\lambda-2)fv
\end{array}.$$ 	
If $ev$ and $fv$ are both zero, then $\{v\}$ is an invariant subspace for $e,f,h$ and hence $\{v\}=V.$ If, say, $ev\neq 0,$ consider the set of vectors $\{v,ev,e^{2}v,....\}.$ If $e^{j}v$ and $e^{k}v$ are both non-zero and $j\neq k,$ then they are both eigenvectors of $h,$ but with different eigenvalues (by induction from what we proved above). Thus the non-zero vectors in $\{v,ev,e^{2}v,...\}$ are linear independent and since $V$ is finite dimensional, there exists a $k\in\mathbb{N},$ such that $e^{k}v=0,$ but $e^{j}v\neq 0$ for $0\leq j\leq k-1.$ Let $w:=e^{k-1}v,$ then by the same argument as before, there is a $n\in\mathbb{N},$ such that the set $\{w,fw,...f^{n}w\}$ consists of non-zero linear independent eigenvectors for $h$ and such that $f^{n+1}w=0.$
\begin{lem}
The subspace $W$ generated by the set $\{w,fw,...f^{n}w\}$ is invariant under $f,e,h$ and hence $W=V.$
\end{lem}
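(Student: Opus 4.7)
The plan is to check invariance of $W$ under each of $h$, $f$, $e$ separately. Invariance under $h$ is essentially free: from the computation already performed in the introduction (applied inductively), each $f^{j}w$ is an eigenvector of $h$ with eigenvalue $\mu-2j$, where $\mu$ is the eigenvalue of $h$ on $w$, so $h$ preserves each line $\mathbb{C}\cdot f^{j}w$ and therefore preserves $W$. Invariance under $f$ is equally direct from the definition: $f\cdot f^{j}w = f^{j+1}w\in W$ for $0\leq j\leq n-1$, while $f\cdot f^{n}w = f^{n+1}w=0\in W$ by the choice of $n$. Once $W$ is shown to be invariant under $e$ as well, the irreducibility hypothesis on $V$ together with $W\neq 0$ forces $W=V$.

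The substantive step is invariance under $e$, and this is where I would spend the work. The key input is that $ew=0$: by construction $w=e^{k-1}v$ and $e^{k}v=0$. Using the commutation relation~\eqref{su1} in the rearranged form
\begin{equation*}
ef = fe - h,
\end{equation*}
I would prove by induction on $j$ that $e\cdot f^{j}w\in W$ for $0\leq j\leq n$. The base case $j=0$ is precisely $ew=0\in W$. For the inductive step, write
\begin{equation*}
e\cdot f^{j+1}w = (fe-h)\,f^{j}w = f\bigl(e\cdot f^{j}w\bigr) - h\bigl(f^{j}w\bigr).
\end{equation*}
By the inductive hypothesis $e\cdot f^{j}w\in W$, and $W$ is already known to be invariant under $f$, so the first term lies in $W$; the second term equals $(\mu-2j)f^{j}w\in W$. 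Hence $e\cdot f^{j+1}w\in W$, closing the induction.

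I do not anticipate any real obstacle: the only nontrivial aspect is noticing that one must carry out the induction in the order $h,f,e$, because the inductive step for $e$ uses invariance under both $f$ and $h$. One could, at the cost of an extra computation, make the argument even more explicit by showing $e\cdot f^{j}w = c_{j}f^{j-1}w$ for explicit scalars $c_{j}$ satisfying the recursion $c_{j+1}=c_{j}-\mu+2j$, but this is not needed for the lemma as stated; it is enough to observe that the image lands in $W$. Having established invariance under all three generators, irreducibility of $V$ yields $W=V$.
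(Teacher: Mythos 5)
Your proposal is correct and follows essentially the same route as the paper: both arguments reduce to $e$-invariance, use the base case $ew=0$, and push $e$ past $f$ via $ef=fe-h$ in an induction on the power of $f$ (the paper inducts on the filtration $W_k=\spann\{w,\dots,f^kw\}$ rather than on individual vectors, a cosmetic difference). Your explicit ordering of the three checks, using $f$- and $h$-invariance inside the inductive step for $e$, is exactly what the paper's proof does implicitly.
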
						
\begin{proof}
$W$ is trivially invariant under $f$ and $h.$ So it remains to show that it is invariant under $e.$
Let $W_{k}$ be the subspace generated by the subset $\{w,fw,...f^{k}w\},$ we prove by induction that $W_{k}$ is invariant under $e.$ 
$W_{0}=\{w\}$ and $ew=0,$ so in this case it is clear. Assume not that it is true for $k-1,$ then every $x\in W_{k}$ can be written as $\alpha f^{k}w+x_{k-1}$ with $\alpha\in\mathbb{C}$ and $x_{k-1}\in W_{k-1}.$ We now get 
$$
e x=e(\alpha f^{k}w+x_{k-1})=\alpha e f^{k} w+e x_{k-1}=\alpha  f 
e f^{k-1} w+ \alpha [ f, e] f^{k-1} w+e x_{k-1}=
$$
$$\alpha  f e f^{k-1} w+\alpha h f^{k-1} w+e x_{k-1}$$
and all these terms are in $W_{k}$ by induction.
\end{proof}
Let $\lambda$ be the eigenvalue $hw=\lambda w.$ If we write up $h$ in the basis $\{w,fw,...f^{n}w\},$ then by the earlier results, we find that $h$ is diagonal with the numbers $\{\lambda,\lambda-2,...,\lambda-2n\}$ as the diagonal entries. As $h$ is a commutant, it must have zero trace and hence
$$0=\tr{h}=\sum_{k=0}^{n}(\lambda-2k)=(n+1)\lambda-(n+1) (n)$$
from which we deduce $\lambda=n.$ We see now that all the eigenspaces for $h$ is of dimension $1$ and thus since both $ef(f^{k}w)$ and $fe(f^{k}w),$ if non-zero, is in the same eigenspace of $h$ as $f^{k}w$ for all $k,$ we deduce that both $ef$ and $fe$ are diagonal in the $\{w,fw,...f^{n}w\}$ basis.
\begin{lem}
For all $0\leq k\leq n,$ we have $(fe)f^{k}w=-(n-k+1)kf^{k}w$ and $(ef)f^{k}w=-(n-k)(k+1)f^{k}w.$
\end{lem}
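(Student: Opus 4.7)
\bigskip

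\noindent\textbf{Proof proposal.} The plan is to prove both identities simultaneously by induction on $k$, starting from the two facts already established above: $ew = 0$ (since $w = e^{k-1}v$ and $e^k v = 0$ by the choice of $k$), and $hw = nw$ (from $\lambda = n$, together with the observation that the eigenspaces of $h$ on $\operatorname{span}\{w,fw,\dots,f^{n}w\}$ are one--dimensional with eigenvalue $n-2j$ on $f^{j}w$). The relation $[f,e] = h$ will be used in the form $ef = fe - h$, which lets one convert between the two identities at each stage.

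For the base case $k=0$, the first formula reads $(fe)w = f(ew) = 0$, which matches $-(n+1)\cdot 0 \cdot w = 0$; the second follows from $efw = few - hw = 0 - nw = -nw$, which matches $-(n-0)(0+1)w$. For the inductive step, assume both formulas hold for some $k < n$. I would first compute
\[
(fe)f^{k+1}w \;=\; f\bigl(e f^{k+1} w\bigr) \;=\; f\bigl((ef)f^{k}w\bigr) \;=\; -(n-k)(k+1)\,f^{k+1}w,
\]
using the inductive hypothesis on $(ef)f^{k}w$; this is precisely $-(n-(k+1)+1)(k+1)f^{k+1}w$ as required. Next, using $ef = fe - h$ and the eigenvalue $h f^{k+1}w = (n-2(k+1))f^{k+1}w$, I would write
\[
(ef)f^{k+1}w \;=\; (fe)f^{k+1}w - h f^{k+1}w \;=\; \bigl[-(n-k)(k+1) - (n-2k-2)\bigr]f^{k+1}w,
\]
and then verify by direct algebraic manipulation that the bracket equals $-(n-k-1)(k+2)$, which is the claimed value $-(n-(k+1))((k+1)+1)$.

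There is no real obstacle here; the proof is essentially bookkeeping. The only mildly delicate point is keeping track of signs, because the convention used in the paper is $[f,e] = h$ rather than the more common $[e,f] = h$, which is what produces the overall minus signs in both formulas. The algebraic simplification
\[
-(n-k)(k+1) - (n-2k-2) \;=\; (k+2)(k+1-n)
\]
is a one--line calculation that closes the induction.
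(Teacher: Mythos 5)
Your proof is correct and follows essentially the same route as the paper: a simultaneous induction on $k$, first obtaining $(fe)f^{k+1}w$ by applying $f$ to the inductive value of $(ef)f^{k}w$, then recovering $(ef)f^{k+1}w$ via $ef = fe - h$ and the eigenvalue $hf^{k+1}w=(n-2(k+1))f^{k+1}w$. The only difference is cosmetic (you step from $k$ to $k+1$ rather than from $k-1$ to $k$), and your algebraic check of the bracket matches the paper's.
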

\begin{proof}
We prove the statement for both $ef$ and $fe$ at the same time by induction on $k.$
For $k=0,$ we have $(fe)w=0$ and $(ef)w=[e,f]w+(fe)w=-n w,$ so the formula holds.
Assume now $k\geq 0$ and it holds for integers less than $k,$ then
$$(ef)f^{k}w=[e,f]f^{k}w+fef^{k}w=-h f^{k}w+f(ef)f^{k-1}w=$$
$$-(n-2k)f^{k}w-(n-k+1)kf^{k}w=-(n-2k+(n-k+1)k)f^{k}w=$$
$$-(n-k)(k+1)f^{k}w$$
and
$$(fe)f^{k}w=f(ef)f^{k-1}w=-(n-k+1)k f^{k}w. $$
\end{proof}

We can from this deduce that $e$ acts as $f^{k}w\mapsto -(n-k+1)k f^{k-1}w.$ If we make a new basis $w_{k}=\frac{1}{k!}f^{k}w$ (set $w_{-1}=0$),
then in this this basis we get 
\begin{equation}\label{22}
\begin{array}{ccc}
e w_{k}=-(n-k+1)w_{k-1}\\
f w_{k}=(k+1)w_{k+1}\\
hw_{k}=(n-k)w_{k}.
\end{array}
\end{equation}
This classifies the irreducible representations of $\su$ and hence also the irreducible representations of $SU(2).$ The next theorem is a classic Lie group result.
\begin{thm}
For every integer $n\in\mathbb{N},$ there is up to equivalence a unique irreducible representation $\pi_{n}:SU(2)\rightarrow V_{n}$ of dimension $n+1.$
The corresponding representation of $\su$ acts as in~\eqref{22}.
\end{thm}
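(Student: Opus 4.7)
The plan is to decompose the statement into three parts: uniqueness of each irreducible Lie algebra representation, existence of such a representation for every $n$, and the bijective correspondence between representations of $SU(2)$ and of $\su$.

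For uniqueness, the analysis preceding the theorem essentially finishes the job. Starting from any finite-dimensional irreducible representation of $\su$, we constructed a basis $\{w, fw, \ldots, f^n w\}$ on which $h$, $e$, $f$ act exactly as in~\eqref{22}, and the trace identity forced the top eigenvalue to be $\lambda = n$ (so the dimension is $n+1$). Two irreducible representations of the same dimension therefore admit bases on which the operators are represented by identical matrices, and the induced linear isomorphism is the desired intertwiner.

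For existence, I would exhibit an explicit model: let $V_n$ be the space of homogeneous polynomials of degree $n$ in two complex variables $z_1, z_2$, on which $SU(2)$ acts by $(g \cdot p)(z) = p(g^{-1} z)$. Differentiating at the identity gives an $\su$-action, and after picking the monomial basis and rescaling one recovers exactly~\eqref{22}. Alternatively, one can verify directly from~\eqref{22} that the commutation relations of $\su$ hold on each basis vector $w_k$ — a routine check. Irreducibility follows because $h$ has distinct eigenvalues on the $w_k$, so any invariant subspace is a direct sum of some of the one-dimensional $h$-eigenspaces, and once it contains one $w_k$, iterating $e$ and $f$ generates all of $V_n$.

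To pass from $\su$ to $SU(2)$, I would invoke that $SU(2) \cong S^3$ is simply connected, so every finite-dimensional representation of its Lie algebra integrates uniquely to a smooth representation of the group; this correspondence is bijective and preserves irreducibility because group-invariant subspaces are Lie-algebra-invariant under differentiation, and vice versa under exponentiation on a connected group. Combining with the existence and uniqueness statements on the Lie algebra side yields the theorem. The main obstacle is this last integration step: it genuinely uses the simple connectedness of $SU(2)$, and without it one would only obtain a representation of the universal cover rather than of $SU(2)$ itself. The remaining verifications — checking the commutation relations on $V_n$ and confirming irreducibility — are mechanical.
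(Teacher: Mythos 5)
Your proposal is correct, and for the core classification it follows the same route as the paper: the highest-weight string argument (eigenvector of $h$, the shift computations for $e$ and $f$, the basis $\{w,fw,\dots,f^nw\}$, and the trace identity forcing $\lambda=n$) is exactly the analysis the paper carries out before stating the theorem, and the paper then simply declares the theorem a ``classic Lie group result'' without further proof. Where you genuinely go beyond the paper is in the two steps it leaves implicit. First, existence: the paper never verifies that the formulas~\eqref{22} actually define a representation (the derivation only shows that \emph{if} an irreducible representation exists it must look like~\eqref{22}); your polynomial model on homogeneous degree-$n$ polynomials in two variables, or the direct check of the commutation relations on the $w_k$, closes that gap, and your irreducibility argument via the distinct $h$-eigenvalues is sound. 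Second, the passage from $\su$ back to $SU(2)$: the paper asserts the one-to-one correspondence between group and Lie algebra representations at the outset without justification, whereas you correctly isolate simple connectedness of $SU(2)\cong S^3$ as the ingredient that lets every Lie algebra representation integrate to the group rather than merely to the universal cover. So your write-up is not a different method but a more complete version of the paper's argument; the only caveat is that invoking the integration theorem is itself a nontrivial black box, which is presumably why the paper chose to cite the result as classical rather than prove it.
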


The idea of the general representation theory for $SU_{q}(2)$ below, is to find a system similar to $\su$ whose irreducible representations can also be analyzed in complete way and which are in a one to one correspondence with the irreducible representations of $SU_{q}(2).$

\section{$SU_{q}(2)$}
We define the compact quantum group (CQG) $SU_{q}(2).$ The exposition here is a slightly reworked version of~\cite{wor}.
\begin{defn}
Let $q\in [0,1]$ and $\alpha,\gamma$ be elements in a unital $*$-algebra $A$ such that 
\begin{equation}\label{mm}
\left[\begin{array}{ccc}
\alpha & -q \gamma^{*}  \\
\gamma & \alpha^{*}
\end{array}\right]\in M_{2}(A)\text{ is unitary.}
\end{equation}
 We define $SU_{q}(2)$ to be the universal $C^{*}$-algebra generated $\alpha,\gamma$ satisfying this relation.
\end{defn}

"More" concretely, this $C^{*}$-algebra can be constructed by first taking $I$ to be the index set of all triples $\{\alpha_{\lambda},\gamma_{\lambda},\mathcal{H}_{\lambda}\}$ such that $\mathcal{H}_{\lambda}$ is a Hilbert space, $\alpha_{\lambda},\gamma_{\lambda}\in B(\mathcal{H}_{\lambda})$ and the matrix~\eqref{mm} with $\alpha=\alpha_{\lambda},\gamma=\gamma_{\lambda}$ is unitary. Take now $\alpha'=\prod_{\lambda\in I}(\alpha_{\lambda})$ and $\gamma'=\prod_{\lambda\in I}(\gamma_{\lambda})$ as elements in $ \prod_{\lambda\in I}B(\mathcal{H}_{\lambda})$ taken as a $C^{*}$-algebra with the product norm $\sup_{\lambda \in I}\|.\|_{\lambda}.$ Note that as~\eqref{mm} is unitary, both $\alpha_{\lambda}$ and $\gamma_{\lambda}$ must have norm $\leq 1$ and hence $\prod_{\lambda\in I}(\alpha_{\lambda})$ and $\prod_{\lambda\in I}(\gamma_{\lambda})$ actually defines elements in $\prod_{\lambda\in I}B(\mathcal{H}_{\lambda}),$ also with norm $\leq 1.$ We can now define $SU_{q}(2)'$ to be the $C^{*}$-algebra in $\prod_{\lambda\in I}B(\mathcal{H}_{\lambda})$ generated by $\alpha'$ and $\gamma'$. By universality of $SU_{q}(2),$ we get a surjective homomorphism $\phi:SU_{q}(2)\rightarrow SU_{q}(2)'$  such that $\phi(\alpha)= \alpha'$ and $\phi(\gamma)=\gamma'.$ However, by properties of the product and the Gelfand-Newmark construction, we get a surjective homomorphism $\psi:SU_{q}(2)'\rightarrow SU_{q}(2)$ such that $\psi(\alpha')=\alpha$ and $\psi(\gamma')=\gamma.$ Thus $\phi \circ \psi= id$ and $\psi\circ \phi =id$ and hence $SU_{q}(2)\cong SU_{q}(2)'$
\\

The condition that~\eqref{mm} is unitary is easily seen to be equivalent with the following set of equations
\begin{equation}\label{nn}
\begin{array}{ccc}
\alpha^{*} \alpha+\gamma^{*} \gamma=I, & \alpha \alpha^{*}+ q^{2}\gamma^{*} \gamma=I, & \gamma^{*}\gamma=\gamma \gamma^{*},\\
\alpha \gamma=q \gamma \alpha, & \alpha \gamma^{*}=q \gamma^{*} \alpha.
\end{array}
\end{equation}

\begin{prop}
$SU_{q}(2)$ can be given the structure of a compact quantum group.
The comultiplication $\Delta:SU_{q}(2)\rightarrow SU_{q}(2)\otimes SU_{q}(2)$ is acts on the generators $\alpha,\gamma$ as 
\begin{equation}
\begin{array}{ccc}
\Delta(\alpha)=\alpha\otimes \alpha-q \gamma^{*}\otimes \gamma, & \Delta (\gamma)= \gamma\otimes \alpha+ \alpha^{*}\otimes \gamma.
\end{array}
\end{equation}
\end{prop}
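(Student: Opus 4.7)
The plan is to show that $\Delta$ is a well-defined $*$-homomorphism by invoking the universal property of $SU_q(2)$, then verify coassociativity on the generators, and finally establish the Woronowicz density conditions that promote $(SU_q(2),\Delta)$ to a full compact quantum group.

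For existence of $\Delta$, I would write $U = \bigl[\begin{smallmatrix} \alpha & -q\gamma^{*} \\ \gamma & \alpha^{*} \end{smallmatrix}\bigr] \in M_2(SU_q(2))$ and consider the two embeddings of $U$ into $M_2(SU_q(2)\otimes SU_q(2))$ obtained by applying the entrywise $*$-homomorphisms $a \mapsto a\otimes 1$ and $a \mapsto 1\otimes a$; call the results $U_1$ and $U_2$. Both are unitary, since a $*$-homomorphism applied entrywise to a unitary matrix yields a unitary, so their matrix product $V := U_1 U_2$ is unitary in $M_2(SU_q(2)\otimes SU_q(2))$. A direct computation of $V_{ij} = \sum_k U_{ik}\otimes U_{kj}$ gives $V_{11} = \alpha\otimes\alpha - q\gamma^{*}\otimes\gamma$, $V_{21} = \gamma\otimes\alpha + \alpha^{*}\otimes\gamma$, and $V_{12} = -q V_{21}^{*}$, $V_{22} = V_{11}^{*}$. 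Hence $V$ has exactly the form~\eqref{mm} required of the defining unitary, and by universality there is a unique unital $*$-homomorphism $\Delta:SU_q(2)\to SU_q(2)\otimes SU_q(2)$ sending $\alpha \mapsto V_{11}$ and $\gamma \mapsto V_{21}$.

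For coassociativity, since both $(\Delta\otimes\mathrm{id})\Delta$ and $(\mathrm{id}\otimes\Delta)\Delta$ are $*$-homomorphisms into $SU_q(2)^{\otimes 3}$, it suffices to check equality on $\alpha$ and $\gamma$. Conceptually this is the associativity of the threefold matrix product $U_1 U_2 U_3$ in $M_2(SU_q(2)^{\otimes 3})$: iterating the formula for $V_{ij}$ gives $(\Delta\otimes\mathrm{id})(V_{ij}) = \sum_{k,\ell} U_{ik}\otimes U_{k\ell}\otimes U_{\ell j} = (\mathrm{id}\otimes\Delta)(V_{ij})$, which settles it on generators.

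The real content, and the step I expect to be the main obstacle, is the Woronowicz density condition: the subspaces $\Delta(SU_q(2))(1\otimes SU_q(2))$ and $\Delta(SU_q(2))(SU_q(2)\otimes 1)$ must be dense in $SU_q(2)\otimes SU_q(2)$. The standard strategy is to descend to the dense $*$-subalgebra $\mathcal{A}\subset SU_q(2)$ generated algebraically by $\alpha$ and $\gamma$. Using the universal property (with one-dimensional, respectively matrix, target representations) one constructs on $\mathcal{A}$ a counit $\varepsilon$ determined by $\varepsilon(\alpha)=1$, $\varepsilon(\gamma)=0$ and an antipode $S$ encoded by $S(U) = U^{*}$, then verifies the Hopf $*$-algebra axioms on generators. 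Once $\mathcal{A}$ is a Hopf $*$-algebra, the identity $\sum a_{(1)} S(a_{(2)})\otimes a_{(3)} = 1\otimes a$ (and its mirror) implies $\Delta(\mathcal{A})(1\otimes \mathcal{A}) = \mathcal{A}\otimes\mathcal{A}$ algebraically. Taking closures and invoking density of $\mathcal{A}\otimes\mathcal{A}$ in $SU_q(2)\otimes SU_q(2)$ yields the cancellation conditions, completing the verification that $SU_q(2)$ is a compact quantum group.
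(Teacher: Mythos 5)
Your construction of $\Delta$ and the coassociativity check are exactly the paper's argument: the paper likewise forms $W=U_{1,2}U_{1,3}$, observes it is unitary and of the defining shape \eqref{mm}, invokes universality, and settles coassociativity by the threefold product $U_{1,2}U_{1,3}U_{1,4}$ on generators. Where you genuinely diverge is the cancellation laws. The paper avoids Hopf-algebra machinery entirely: it first notes that the set of $b$ with $b\otimes I\in\spann\bigl(\Delta(SU_{q}(2))(I\otimes SU_{q}(2))\bigr)$ is closed under products, and then exhibits the explicit identities $\alpha\otimes I=\Delta(\alpha)(I\otimes\alpha^{*})+q^{2}\Delta(\gamma^{*})(I\otimes\gamma)$ and $\gamma\otimes I=\Delta(\gamma)(I\otimes\alpha^{*})-q\Delta(\alpha^{*})(I\otimes\gamma)$, which together give density. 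Your route instead builds the counit and antipode on the dense $*$-subalgebra and quotes the standard Hopf fact; that is valid and more conceptual, but it front-loads nontrivial work the paper only does in the following subsection: since $S$ is an anti-homomorphism that does not commute with $*$ when $q\neq 1$, the $C^{*}$-universal property does not produce it, and one must verify at the level of the free $*$-algebra that $S$ annihilates the defining relations (the paper's map $\Sigma$), with the monomial basis lemma lurking in the background. Also, the identity you actually need to feed into the cancellation argument is $a\otimes b=\sum\Delta(a_{(1)})(1\otimes S(a_{(2)})b)$ rather than $\sum a_{(1)}S(a_{(2)})\otimes a_{(3)}=1\otimes a$ as written, since the latter is not visibly an element of $\Delta(\mathcal{A})(1\otimes\mathcal{A})$. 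In short: the paper's bare-hands identities buy self-containedness at this stage of the development, while your route buys reusability, since the Hopf structure has to be established later anyway.
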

\begin{proof}
 First, we need to properly define the comultiplication. Set $\alpha':=\alpha\otimes \alpha-q \gamma^{*}\otimes \gamma$ and $\gamma':=\gamma\otimes \alpha+ \alpha^{*}\otimes \gamma.$ Now let $U:=\left[\begin{array}{ccc}
\alpha & -q \gamma^{*}  \\
\gamma & \alpha^{*}
\end{array}\right]\in M_{2}(SU_{q}(2))$ and $W:=\left[\begin{array}{ccc}
\alpha' & -q \gamma'^{*}  \\
\gamma' & \alpha'^{*}
\end{array}\right]\in M_{2}(SU_{q}(2)\otimes SU_{q}(2)).$ A quick calculation gives that $U_{1,2}U_{1,3}=W$ and as $U$ is unitary, so must also $W$ be. The universal property of $SU_{q}(2)$ now gives a homomorphism $\Delta:SU_{q}(2)\rightarrow SU_{q}(2)\otimes SU_{q}(2).$ The coassociativity $(\Delta\otimes I)\Delta=(I\otimes \Delta)\Delta$ follows from 
$$(\iota\otimes \Delta\otimes I)(\iota\otimes \Delta)U=U_{1,2}U_{1,3}U_{1,4}$$
$$(\iota\otimes I\otimes \Delta)(\iota\otimes \Delta)U=U_{1,2}U_{1,3}U_{1,4}$$
that proves coassociativity for $\alpha,\gamma$ and hence for for the whole algebra, since they are generating it. 
\\

The only thing left to prove is to show cocancellation (recall that this means that the linear spans of the sets $\Delta(SU_{q}(2))(I\otimes SU_{q}(2))$ and  $\Delta(SU_{q}(2))( SU_{q}(2)\otimes I)$ are both dense in $SU_{q}(2)\otimes SU_{q}(2)$). We prove the denseness of the linear span of $\Delta(SU_{q}(2))(I\otimes SU_{q}(2)),$ the rest of the proof is similar. Notice that we only need to prove $b\otimes I\in \spann(\Delta(SU_{q}(2))(I\otimes SU_{q}(2)))$ for $b$ in a dense subset of $SU_{q}(2).$ First we prove that if $b\otimes I,g\otimes I \in \spann(\Delta(SU_{q}(2))(I\otimes SU_{q}(2))),$ then also $bg\otimes I\in \spann(\Delta(SU_{q}(2))(I\otimes SU_{q}(2))).$ Let $b\otimes I=\sum \Delta(a_{k})(I\otimes c_{k})$ and $g\otimes I=\sum \Delta(f_{k})(I\otimes h_{k}) $ then 
$$bg\otimes I=(b\otimes I)(g\otimes I)=\sum \Delta(a_{k})(I\otimes c_{k})(g\otimes I)=\sum \Delta(a_{k})(g\otimes I)(I\otimes c_{k})=$$
$$\sum \Delta(a_{k}f_{j})(I\otimes h_{j}c_{k}).$$
Thus we need now only to show that $\alpha\otimes I,\gamma\otimes I\in  \spann(\Delta(SU_{q}(2))(I\otimes SU_{q}(2))).$
Using the equations~\eqref{nn}, we get 
$$\alpha\otimes I=\alpha\otimes \alpha \alpha^{*}+q^{2}\alpha\otimes \gamma^{*}\gamma=\alpha\otimes \alpha \alpha^{*}+q^{2}\alpha\otimes \gamma^{*}\gamma+q(\gamma^{*}\otimes \gamma-\gamma^{*}\otimes \gamma)(I\otimes \alpha^{*})= $$
$$(\alpha\otimes \alpha-q \gamma^{*}\otimes \gamma)(I\otimes \alpha^{*})+q^{2}\alpha\otimes \gamma^{*}c+q \gamma^{*}\otimes  \gamma \alpha^{*}=$$
$$(\alpha\otimes \alpha-q \gamma^{*}\otimes \gamma)(I\otimes \alpha^{*})+q^{2}(\alpha\otimes \gamma^{*}+\gamma^{*}\otimes \alpha^{*})(I\otimes \gamma)=$$
$$\Delta(\alpha)(I\otimes \alpha^{*})+q^{2}\Delta(\gamma^{*})(I\otimes \gamma)$$
and also 
$$\gamma\otimes I=\gamma\otimes \alpha \alpha^{*}+q^{2}\gamma\otimes \gamma^{*}\gamma=\gamma\otimes \alpha \alpha^{*}+q^{2}\gamma\otimes \gamma^{*}\gamma+(\alpha^{*}\otimes \gamma-\alpha^{*}\otimes \gamma)(I\otimes \alpha^{*})=$$
$$(\gamma\otimes \alpha +\alpha^{*}\otimes \gamma)(I\otimes \alpha^{*})+q^{2}\gamma\otimes \gamma^{*}\gamma-\alpha^{*}\otimes \gamma \alpha^{*}=$$
$$(\gamma\otimes \alpha +\alpha^{*}\otimes \gamma)(I\otimes \alpha^{*})-q(\alpha^{*}\otimes \alpha^{*}-q \gamma\otimes \gamma^{*})(I\otimes \gamma)=$$
$$\Delta(\gamma)(I\otimes \alpha^{*})-q\Delta(\alpha^{*})(I\otimes \gamma)$$
\end{proof}
\begin{rem}
When $q=1,$ we get $SU_{1}(2)\cong C(SU(2)).$ This is not hard to deduce as~\eqref{mm} shows that $SU_{1}(2)$ is commutative and that the spectrum of the algebra is homeomorphic to $S^{3}.$ Furthermore, it is also easy to see that this is an isomorphism of compact groups (recall that the spectrum of a commutative CQG has a compact group structure).
\end{rem}
\subsection{The $*$-Hopf algebra $SU_{q}^{0}(2)$}
As usual, inside every CQG lives a $*$-Hopf algebra and $SU_{q}(2)$ is no exception. We assume now $q\in (0,1]$
and define:
\begin{defn}\label{er}
Let $SU^{0}_{q}(2)\subseteq SU_{q}(2)$ be the $*$-subalgebra of $SU_{q}(2)$ generated by $\alpha$ and $\gamma$ (i.e no closure or similar topological properties). As $\Delta(SU_{q}^{0})\subseteq SU_{q}^{0}\otimes SU_{q}^{0}$ (notice that $SU_{q}^{0}\otimes SU_{q}^{0}\subseteq SU_{q}\otimes SU_{q}$ is isomorphic to the \textit{algebraic tensor product} of $SU_{q}^{0}$ with itself) we have an $*$-algebra with comultiplication. We now define a counit $\epsilon$ and antipode $S$ by the formulas
\begin{equation}\label{ee}
\begin{array}{ccc}
\epsilon(\alpha)=1, & \epsilon(\gamma)=0,
\end{array}
\end{equation}

\begin{equation}\label{bb}
\begin{array}{ccc}
S(\alpha)=\alpha^{*}, &  S(\alpha^{*})=\alpha, &  S(\gamma)=-q \gamma,\\
&  S(\gamma^{*})=-q^{-1}\gamma^{*}.
\end{array}
\end{equation}
We can now extend these formulas to all of $SU^{0}_{q}(2)$ by  require that $\epsilon$ is a homomorphism $SU^{0}_{q}(2)\rightarrow \mathbb{C}$ and $S$ to be an antihomomorphism $SU_{q}^{0}(2)\rightarrow SU_{q}^{0}(2).$
\end{defn}
Still there is the matter of whether or not these maps are actually well defined.
For this, we need a lemma. First we introduce some notation. For $k\in\mathbb{Z}$ and $n,m\in\mathbb{N},$ let
\begin{equation}
A_{k,n,m}=
\left\{\begin{array}{ccc}
 \alpha^{k}\gamma^{*n}\gamma^{m}, & \text{If $k\geq 0$}\\
\alpha^{*(-k)}\gamma^{*n}\gamma^{m}, & \text{If $k<0$}
\end{array}\right\}
\end{equation}
\begin{lem}\label{bar}
The set $\{A_{k,n,m}\}_{k,m,n}$ is a basis for $SU_{q}^{0}(2).$ 
\end{lem}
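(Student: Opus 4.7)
The plan is to prove two things separately: that the set $\{A_{k,n,m}\}$ spans $SU_q^0(2)$, and that it is linearly independent.

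For spanning, let $W$ be the linear span of the $A_{k,n,m}$. Since $W$ contains $1 = A_{0,0,0}$ and $SU_q^0(2)$ is generated by $\alpha,\alpha^*,\gamma,\gamma^*$, it suffices to show $W$ is stable under left multiplication by each of these four generators. The relations in~\eqref{nn} supply three ingredients: (i) $\gamma$ and $\gamma^*$ commute, so the ``$\gamma$-block'' $\gamma^{*n}\gamma^{m}$ is unambiguous; (ii) the $q$-commutation $\alpha\gamma=q\gamma\alpha$, $\alpha\gamma^*=q\gamma^*\alpha$ (and adjoints) allows one to push any power of $\gamma,\gamma^*$ past a power of $\alpha,\alpha^*$ at the cost of a power of $q$; and (iii) $\alpha\alpha^*=1-q^{2}\gamma^*\gamma$ and $\alpha^*\alpha=1-\gamma^*\gamma$ convert the only ``crossings'' $\alpha\alpha^*$ or $\alpha^*\alpha$ into a scalar plus $\gamma$-factors. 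For example, for $k \geq 1$ one computes
$$\alpha^{*}\cdot A_{k,n,m} = (\alpha^{*}\alpha)\alpha^{k-1}\gamma^{*n}\gamma^{m} = (1-\gamma^{*}\gamma)\alpha^{k-1}\gamma^{*n}\gamma^{m} = A_{k-1,n,m} - q^{-2(k-1)}A_{k-1,n+1,m+1},$$
which lies in $W$; the remaining cases ($\alpha$ from the left, and $\gamma,\gamma^*$ from the left) are handled by similar short manipulations.

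For linear independence, I would exhibit a $*$-representation $\pi$ of $SU_q^0(2)$ on $\mathcal{H} := \ell^2(\mathbb{Z}_{\geq 0})\otimes \ell^2(\mathbb{Z})$, defined on the orthonormal basis $\{e_N \otimes e_L\}_{N\geq 0,\,L\in\mathbb{Z}}$ by
$$\pi(\alpha)(e_N\otimes e_L) = \sqrt{1-q^{2N}}\,e_{N-1}\otimes e_L, \qquad \pi(\gamma)(e_N\otimes e_L) = q^{N} e_N\otimes e_{L+1},$$
with the convention $e_{-1}:=0$. The relations~\eqref{nn} are an immediate check: $\gamma^*\gamma$ acts as $q^{2N}$, $\alpha^*\alpha$ as $1-q^{2N}$, and the $q$-commutations are visible from the formulas. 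A short induction then gives
$$\pi(A_{k,n,m})(e_N\otimes e_L) = q^{(n+m)N}\,C_{k,N}\,e_{N-k}\otimes e_{L+m-n}$$
for all $N$ exceeding $\max(0,k)$, with an explicit nonzero factor $C_{k,N}$. If $\sum c_{k,n,m}A_{k,n,m} = 0$, applying $\pi$ of the sum to $e_N\otimes e_L$ and projecting onto $e_{N-k}\otimes e_{L+j}$ isolates those triples with the given $k$ and $m-n=j$; the surviving coefficients then involve $q^{(2n+j)N}$, and letting $N \to \infty$ while comparing the distinct exponentials $q^{2nN}$ forces every $c_{k,n,m}$ to vanish.

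The main obstacle is the linear-independence half: one has to invent a representation rich enough to separate all three parameters, and then exploit the fact that $k$ is detected by the shift in the first tensor factor, $m-n$ by the shift in the second, and $n+m$ by the exponential scaling in $N$. The spanning step, by contrast, is a routine bookkeeping exercise with the $q$-commutation and the two quadratic relations of~\eqref{nn}.
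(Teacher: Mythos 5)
Your proof is correct for $q\in(0,1)$ and is essentially the paper's own argument: the representation you construct on $\ell^{2}(\mathbb{Z}_{\geq 0})\otimes\ell^{2}(\mathbb{Z})$ is exactly the paper's $\phi$ on the basis $\{e_{r,s}\}$, your formula for $\pi(A_{k,n,m})$ is the paper's equation~\eqref{vv}, and the separation of $k$ by the shift in the first factor, $m-n$ by the shift in the second, and $n+m$ by the exponential $q^{(n+m)N}$ is precisely how the paper isolates the coefficients (the paper phrases the last step as a polynomial $P(q^{r})$ vanishing at infinitely many points rather than a $N\to\infty$ limit, but these are the same Vandermonde-type fact). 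Your spanning argument is also fine, and in fact more detailed than the paper's one-line dismissal.

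The one genuine gap is the case $q=1$, which is in scope since the subsection introducing $SU_{q}^{0}(2)$ assumes $q\in(0,1]$. There your representation degenerates: $\pi(\gamma^{*}\gamma)$ acts as $q^{2N}=1$, forcing $\pi(\alpha)=0$, so $\pi$ kills every $A_{k,n,m}$ with $k\neq 0$; and even on the surviving terms the exponentials $q^{2nN}$ all collapse to $1$, so the limiting argument separates nothing. The paper handles this case separately by identifying $SU_{1}(2)$ with $C(SU(2))=C(S^{3})$, writing $A_{k,m,n}$ as an explicit function $e^{ik\lambda}\sin(\gamma)^{k}\cos(\gamma)^{m+n}e^{i(n-m)\theta}$ in coordinates on $S^{3}$, and invoking Fourier analysis. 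You would need to add such an argument (or restrict the lemma to $q<1$) to make the proof complete.
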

\begin{proof}
Some applications of~\eqref{nn} gives that $\{A_{k,n,m}\}_{k,m,n}$ spans $SU^{0}_{q}(2),$ so we need to check linear independence. We split this into two cases $q\in (0,1)$ and $q=1$
 To prove it when $q\in(0,1)$, we define a convenient representation on a Hilbert space. Let $\mathcal{H}$ be a Hilbert space with basis $\{e_{r,s}\}_{r\in\mathbb{N},s\in\mathbb{Z}}.$ Now define $\alpha',\gamma'\in B(\mathcal{H})$ by 
\begin{equation}
\begin{array}{ccc}
\alpha' e_{r,s}=\sqrt{1-q^{2r}}e_{r-1,s}, & \gamma' e_{r,s}=q^{r}e_{r,s+1}.
\end{array}
\end{equation}
We leave it as an exercise for the reader to show that actually $\alpha'$ and $\gamma'$ satisfies the equation in~\eqref{mm} and hence there exists a homomorphism $\phi:SU_{q}(2)\rightarrow B(\mathcal{H})$ such that $\phi(\alpha)=\alpha'$ and $\phi(\gamma)=\gamma'.$ 
One also easily checks the formula
\begin{equation}\label{vv}
\phi(A_{k,n,m})e_{r,s}=\left(\prod_{l=0}^{|k|-1}\sqrt{1-q^{2r+\sgn(k)2l}}\right)q^{r(n+m)}e_{r-k,s+m-n}.
\end{equation}
We can now prove linear independence of $\{A_{k,n,m}\}_{k,m,n}.$
Assume that we have a linear dependence of the form $\sum_{j,l,p}\lambda_{i,j,l}A_{k_{i},n_{j},m_{l}}=0$ of minimal length (i.e no $\lambda_{i,j,l}$ can be omitted without making the sum nonzero and every index $(k_{i},n_{j},m_{l})$ only appear once). Applying $\phi$ to this sum and using~\eqref{vv} we see directly that $k_{i}$ and $m_{l}-n_{j}$ must be constant.
But then again using~\eqref{vv} we see that for $r\geq |k_{i}|,$ we have 
\begin{equation}\label{jj}
\sum_{i,j,l} \lambda_{i,j,l}q^{r(n_{j}+m_{l})}=0.
\end{equation}
If $b=m-n$ and $c=m+n,$ then $m=\frac{c+b}{2}$ and $n=\frac{c-b}{2},$ so if $m_{l}-n_{j}$ is constant then the function $(k_{i},n_{j},m_{l})\mapsto m_{l}+n_{j}$ is injective. So we can rename the coefficients $\lambda_{m_{l}+n_{j}}:=\lambda_{k_{i},n_{j},m_{l}}$ and extend the function $\lambda_{k}$ to a function on all of $k \in\mathbb{N}$ by setting it to zero elsewhere. Now write~\eqref{jj} as
\begin{equation}
\begin{array}{ccc}
P(q^{r})=0, & \forall r\geq |k_{i}| \\
\text{where $P(x)=\sum_{k=0}^{\max (m_{l}+n_{j})} \lambda_{k}x^{k}$} .
\end{array}
\end{equation}
This clearly implies $P(x)=0$ and this proves the linear independence when $q\in (0,1).$
\\

When $q=1,$ we are in the case of $C(SU(2)).$ So $A_{k,m.n}$ can be considered a function on $S^{3}.$ If we parameterize these functions as $$A_{k,m,n}(\lambda,\gamma,\theta)=e^{i k \lambda}\sin(\gamma)^{k}\cos(\gamma)^{m+n}e^{i (n-m)\theta}$$
Using some Fourier analysis arguments, it is easy to see that these functions are linearly independent.
\end{proof}
\begin{prop}
$SU^{0}_{q}(2)$ with the counit $\epsilon$ and antipode $S$ as defined above is a $*$-Hopf algebra. 
\end{prop}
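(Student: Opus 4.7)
The plan is to verify four assertions in sequence: that $\epsilon$ extends to a well-defined unital $*$-algebra homomorphism $SU_q^0(2)\to\mathbb{C}$, that $S$ extends to a well-defined $*$-algebra anti-homomorphism on $SU_q^0(2)$, that the counit axiom $(\epsilon\otimes\iota)\Delta=\iota=(\iota\otimes\epsilon)\Delta$ holds, and that the antipode axiom $m(S\otimes\iota)\Delta=\epsilon(\cdot)I=m(\iota\otimes S)\Delta$ holds, where $m$ denotes multiplication.

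For the well-definedness of $\epsilon$, I would observe that under the substitution $\alpha,\alpha^*\mapsto 1$ and $\gamma,\gamma^*\mapsto 0$ every relation of~\eqref{nn} collapses to $1=1$ or $0=0$, so the assignment descends from the free $*$-algebra to $SU_q^0(2)$. For $S$ the analogous check is more substantive: each of the five relations in~\eqref{nn} must remain valid after $S$ is applied as an anti-homomorphism. For instance, $\alpha\gamma=q\gamma\alpha$ becomes $S(\gamma)S(\alpha)=qS(\alpha)S(\gamma)$, which after substitution reads $\gamma\alpha^*=q\alpha^*\gamma$, precisely the $*$-image of the existing relation $\alpha\gamma^*=q\gamma^*\alpha$. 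The sphere relation $\alpha^*\alpha+\gamma^*\gamma=I$ is carried to $\alpha^*\alpha+\gamma\gamma^*=I$, which coincides with the original via $\gamma^*\gamma=\gamma\gamma^*$, and the remaining two cases are short and similar. Lemma~\ref{bar} then guarantees that the resulting anti-homomorphism is unambiguously defined on the basis $\{A_{k,n,m}\}$.

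For the counit axiom, both $(\epsilon\otimes\iota)\Delta$ and $(\iota\otimes\epsilon)\Delta$ are $*$-algebra endomorphisms of $SU_q^0(2)$, so it suffices to evaluate them on the generators $\alpha,\gamma$; using~\eqref{ee} the verification is immediate. For the antipode axiom, on $\alpha$ one computes
\[
m(S\otimes\iota)\Delta(\alpha)=S(\alpha)\alpha-qS(\gamma^*)\gamma=\alpha^*\alpha+\gamma^*\gamma=I,
\]
and analogously $m(\iota\otimes S)\Delta(\alpha)=\alpha\alpha^*+q^2\gamma^*\gamma=I$; on $\gamma$ both expressions collapse to zero using $\alpha\gamma=q\gamma\alpha$ together with its adjoint $\gamma\alpha^*=q\alpha^*\gamma$. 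A standard Sweedler-notation argument, crucially invoking the anti-multiplicativity of $S$ from step two, will then promote the axiom from generators to all products.

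The main obstacle will be the second step: verifying that $S$, defined by an anti-homomorphism rule on the four symbols $\alpha,\alpha^*,\gamma,\gamma^*$, descends past the relations~\eqref{nn}. This amounts to a handful of cross-checks in which reversing the order of a product and rescaling the $\gamma$-factors by $\pm q^{\pm 1}$ must conspire to return one of the relations~\eqref{nn} or its $*$-adjoint. Once that finite verification is complete, the counit and antipode axioms reduce to the generator computations sketched above, and the $*$-Hopf algebra structure follows.
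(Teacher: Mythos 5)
Your proposal follows essentially the same route as the paper: both descend $\epsilon$ and $S$ from the free $*$-algebra by checking that the defining relations (and their adjoints) are annihilated, verify the counit and antipode axioms on the generators, and promote the antipode identity to products via the Sweedler-notation multiplicativity argument. The one item the paper includes that you omit is the verification of $S(S(b^{*})^{*})=b$ --- worth adding as a one-line check (the map $b\mapsto S(S(b^{*}))^{*}$ is a homomorphism fixing the generators), and note that $S$ is an algebra anti-homomorphism but \emph{not} a $*$-map when $q<1$ (e.g.\ $S(\gamma)^{*}=-q\gamma^{*}\neq S(\gamma^{*})$), so calling it a ``$*$-algebra anti-homomorphism'' is a slight misnomer.
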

\begin{proof}
Let $W$ be the free unital $*$-algebra generated by two elements $a,c$ and let $I$ be the two-sided $*$-ideal in $W$ generated by the relations~\eqref{nn} with $a=\alpha$ and $c=\gamma.$ The canonical $*$-homomorphism $\phi:W\rightarrow SU^{0}_{q}(2),$ sending $a$ to $\alpha$ and $c$ to $\gamma$ has a kernel containing $I$ and hence we have a $*$-homomorphism $\bar{\phi}:W/I\rightarrow SU_{q}^{0}(2).$ For $k\in\mathbb{Z}$ and $m,n\in\mathbb{N},$ let
$$\Lambda_{k,m,n}=
\left\{
\begin{array}{ccc}
a^{k}c^{* m}c^{n}, & \text{If $k\geq 0$}\\
a^{*(-k)}c^{* m}c^{n}, & \text{If $k< 0$}
\end{array}
\right\}.
$$
The set $\{\Lambda_{k,m,n}\}_{k,m,n}$ spans $W/I$ as a vector space (by the same argument as for $SU^{0}_{q}(2)$) and $\bar{\phi}\left(\Lambda_{k,m,n}\right)=A_{k,m,n}.$ Therefore, $\bar{\phi}$ is actually an $*$-isomorphism of $*$-algebras.
If we define a $*$-homomorphism $e:W\rightarrow \mathbb{C}$ by $e(a)=1$ and $e(c)=0,$ then it is easy to see that $I\subseteq\ker e$ and hence we get a $*$-homomorphism $\epsilon:W/I\cong SU^{0}_{q}(2)\rightarrow\mathbb{C}$ that coincides with the formula for $\epsilon$ given in Definition~\ref{er}. We now show $(I\otimes \epsilon)\Delta=(\epsilon\otimes I)\Delta=I.$ This can be checked easily for $\alpha$ and $\gamma,$ and as both $(I\otimes \epsilon)\Delta$ and $(\epsilon\otimes I)\Delta$ are $*$-homomorphisms $SU^{0}_{q}(2)\rightarrow SU_{q}^{0}(2),$ this extends to all of $SU^{0}_{q}(2).$
\\

Define the homomorphism $\Sigma:W\rightarrow SU^{0}_{q}(2)^{\text{op}}$ (note, no $*$) by
\begin{equation}
\begin{array}{ccc}
\Sigma(a)=\alpha^{*}, & \Sigma(a^{*})=\alpha, & \Sigma(c)=-q \gamma, \\
& \Sigma(c^{*})=-q^{-1}\gamma^{*}.
\end{array}
\end{equation}
We have 
\begin{equation}
\begin{array}{ccc}
\Sigma(a^{*}a+c^{*}c-I)=\alpha^{*}\alpha+\gamma^{*}\gamma-I=0, & \Sigma(a a^{*}+q^{2}c^{*}c-I)=\alpha \alpha^{*}+q^{2}\gamma^{*}\gamma-I=0 \\
\Sigma(c c^{*}-c^{*}c)=\gamma \gamma^{*}-\gamma^{*}\gamma=0, & \Sigma(a c-q c a)=-q(\gamma \alpha^{*}-q \alpha^{*} \gamma)=0\\
 \Sigma( c^{*} a^{*}-q a^{*} c^{*})=-q^{-1}( \alpha \gamma^{*}-q \gamma^{*}\alpha)=0, &  \Sigma(a c^{*}-q c^{*} a)=-q^{-1}(\gamma^{*} \alpha^{*}-q^{*} \alpha^{*} \gamma)=0,\\
  \Sigma( c a^{*}-q a^{*} c)=-q( \alpha^{*} \gamma^{*}-q \gamma^{*}\alpha^{*})=0.
\end{array}
\end{equation}
This shows that $I$ is in the kernel of $\Sigma.$ As a $*$-ideal, $I$ is trivially also an ideal and hence we can define a homomorphism
$S:W/I\cong SU_{q}^{0}(2)\rightarrow SU_{q}^{0}(2)^{\text{op}}.$ Again it is easy to see that our new map $S$ coincide with the one in Definition~\ref{er}. The formula $S(S(b^{*})^{*})=b,\forall b\in SU_{q}^{0}(2)$ follows from that the map $b\mapsto S(S(b^{*}))^{*}$ is a composition of two linear anti-homomorphisms and two anti-linear anti-homomorphisms, and hence is a homomorphism $ SU_{q}^{0}(2)\rightarrow SU_{q}^{0}(2).$ It can be easily checked that this homomorphism is the identity on $\alpha,\alpha^{*},\gamma,\gamma^{*}$ and hence must be the identity on all of $ SU_{q}^{0}(2).$ What is left to prove are the equations (let $m$ denote the multiplication)
\begin{equation}\label{st}
\begin{array}{ccc}
m(I\otimes S)\Delta(b)=\epsilon(b)I, & m(S\otimes I)\Delta(b)=\epsilon(b)I, & \forall b\in SU_{q}^{0}(2).
\end{array}
\end{equation}
Let $Q$ denote the set of all $b\in SU^{0}_{q}(2)$ such that~\eqref{st} is true, this is obviously a linear subspace. We will prove that $Q$ is an algebra and $\alpha,\alpha^{*},\gamma,\gamma^{*}\in Q,$ this will give us $Q=SU_{q}^{0}(2).$
\\

Step $1$: $\alpha,\alpha^{*},\gamma,\gamma^{*}\in Q.$ This is an easy calculation using the equations in~\eqref{nn}
\begin{equation}
\begin{array}{ccc}
m(S\otimes I)\Delta(\alpha)=m(S\otimes I)(\alpha\otimes \alpha -q \gamma^{*}\otimes \gamma)=\alpha^{*}\alpha+\gamma^{*}\gamma=I=\epsilon(\alpha)I\\
m(I\otimes S)\Delta(\alpha)=m(I\otimes S)(\alpha\otimes \alpha -q \gamma^{*}\otimes \gamma)=\alpha \alpha^{*}+q^{2}\gamma^{*}\gamma=I=\epsilon(\alpha)I\\
m(S\otimes I)\Delta(\alpha^{*})=m(S\otimes I)(\alpha^{*}\otimes \alpha^{*} -q \gamma \otimes \gamma^{*})=\alpha \alpha^{*}+q^{2}\gamma^{*}\gamma=I=\epsilon(\alpha^{*})I\\
m(I\otimes S)\Delta(\alpha^{*})=m(I\otimes S)(\alpha^{*}\otimes \alpha^{*} -q \gamma\otimes \gamma^{*})=\alpha^{*} \alpha+\gamma^{*}\gamma=I=\epsilon(\alpha^{*})I\\
\\
m(S\otimes I)\Delta(\gamma)=m(S\otimes I)(\gamma\otimes \alpha + \alpha^{*}\otimes \gamma)=-q \gamma\alpha +\alpha \gamma=0=\epsilon(\gamma)I\\
m(I\otimes S)\Delta(\gamma)=m(I \otimes S)(\gamma\otimes \alpha + \alpha^{*}\otimes \gamma)=\gamma \alpha^{*}- q \alpha^{*}\gamma=0=\epsilon(\gamma)I\\
m(S\otimes I)\Delta(\gamma^{*})=m(S\otimes I)(\gamma^{*}\otimes \alpha^{*} + \alpha\otimes \gamma^{*})=-q^{-1} \gamma^{*} \alpha^{*}+\alpha^{*} \gamma^{*}=0=\epsilon(\gamma^{*})I\\
m(I\otimes S)\Delta(\gamma^{*})=m(I \otimes S)(\gamma^{*}\otimes \alpha^{*} + \alpha\otimes \gamma^{*})=\gamma^{*} \alpha-  q^{-1}\alpha^{*}\gamma^{*}=0=\epsilon(\gamma^{*})I
\end{array}
\end{equation}
\\

Step $2$: $Q$ is an algebra. Assume $b,d\in Q.$ Using the Sweedler notation, we calculate
$$m(I\otimes S)\Delta(bd)=m(I\otimes S)\sum (bd)_{1}\otimes (bd)_{2}=$$
$$\sum b_{1}d_{1} S(d_{2})S(b_{2})=\epsilon(d)\sum b_{1}S(b_{2})=\epsilon(d)\epsilon(b) I=\epsilon(bd)I.$$
A similar calculation gives $m(S\otimes I)\Delta(bd)=\epsilon(bd)I.$ The proof is now complete!
\end{proof}
\begin{rem}
Notice that when $q\in(0,1),$ then $SU_{q}^{0}(2)$ is neither commutative or cocommutative, nor is $S$ commuting with the $*$ operation.
\end{rem}
\section{Finite dimensional representations of $SU_{q}(2)$.}
Recall that a finite dimensional representation (FDR) of a CQG $G$ on a finite dimensional Hilbert space $\mathcal{H}$ is an invertible $U\in B(\mathcal{H})\otimes C(G)$ such that $\iota\otimes \Delta (U)=U_{1,2}U_{1,3}.$
\\

If $\phi:C(G)\rightarrow C(F)$ is a morphism of CQG's, then if $U\in B(\mathcal{H})\otimes C(G)$ is a FDR, we get a new FDR $(\iota\otimes \phi)(U)\in B(\mathcal{H})\otimes C(F).$
Furthermore, this process is also compatible with direct sums and tensor product (let here $\boxplus$ denote direct sum and $\boxtimes$ denote the tensor product of CQG representations) so that $(\iota\otimes\phi)(U\boxtimes W)=(\iota\otimes \phi)(U)\boxtimes (\iota\otimes \phi)(W).$ Also, if $W$ is unitary, then so is $(\iota\otimes \phi)(W).$ These statements are all easily checked.
\\

This simple observation is one of the key ideas when classifying the FDR's for $SU_{q}(2),$  and it begins with the following results:
\begin{prop}\label{an}
Let $z\in C(\mathbb{T})$ be the identity function (on $\mathbb{T}$). There exists a unique morphism of CQG's $\pi: SU_{q}(2)\rightarrow C(\mathbb{T})$ such that $\pi (\alpha)=z$ and $\pi(\gamma)=0.$ 
\\

Moreover, for each $\zeta\in \mathbb{T},$ there exists a unique $*$-homomorphism $\theta_{\zeta}:SU_{q}(2)\rightarrow \mathbb{C}$ such that $\theta_{\zeta}(\alpha)=\zeta$ and $\theta_{\zeta}(\gamma)=0.$ Also $\forall \zeta,\eta\in\mathbb{T},$ we have
$\theta_{\zeta}*\theta_{\eta}=\theta_{\zeta \eta}.$
\end{prop}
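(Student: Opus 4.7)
The plan is to reduce everything to the universal property of $SU_q(2)$, using the fact that any $*$-homomorphism out of $SU_q(2)$ is determined by its values on the generators $\alpha,\gamma$, and that the comultiplication $\Delta$ is similarly determined on these two elements. First I would construct $\pi$: setting $\alpha = z$ and $\gamma = 0$ in the defining matrix~\eqref{mm} yields
$$\left[\begin{array}{cc} z & 0 \\ 0 & \bar z \end{array}\right] \in M_2(C(\mathbb{T})),$$
which is unitary since $|z(\zeta)|=1$ for every $\zeta\in\mathbb{T}$. The universal property of $SU_q(2)$ then produces a $*$-homomorphism $\pi:SU_q(2)\to C(\mathbb{T})$ with $\pi(\alpha)=z$ and $\pi(\gamma)=0$. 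Uniqueness is immediate: any such homomorphism is determined on the $*$-subalgebra $SU_q^0(2)$ by its values on $\alpha,\gamma$, and $SU_q^0(2)$ is dense in $SU_q(2)$, while $*$-homomorphisms between $C^*$-algebras are automatically continuous.

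Next I would verify that $\pi$ is a \emph{morphism of CQG's} and not merely a $*$-homomorphism. This requires $(\pi\otimes\pi)\Delta = \Delta_{\mathbb{T}}\,\pi$, where $\Delta_{\mathbb{T}}$ is the comultiplication on $C(\mathbb{T})$ dual to multiplication on $\mathbb{T}$, so $\Delta_{\mathbb{T}}(z) = z\otimes z$. Both sides are $*$-homomorphisms, so it suffices to check the identity on $\alpha$ and $\gamma$, which is a short direct computation from the explicit formulas $\Delta(\alpha)=\alpha\otimes\alpha - q\gamma^*\otimes\gamma$ and $\Delta(\gamma)=\gamma\otimes\alpha+\alpha^*\otimes\gamma$: on $\alpha$ one gets $z\otimes z - 0$, and on $\gamma$ one gets $0$.

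For the second part, the cleanest route is to set $\theta_\zeta := \mathrm{ev}_\zeta \circ \pi$, where $\mathrm{ev}_\zeta:C(\mathbb{T})\to\mathbb{C}$ is evaluation at $\zeta$. This is evidently a $*$-homomorphism with $\theta_\zeta(\alpha)=\zeta$ and $\theta_\zeta(\gamma)=0$, and uniqueness follows by the same generation argument as before. To establish $\theta_\zeta * \theta_\eta = \theta_{\zeta\eta}$, recall that by definition of the convolution of functionals $\theta_\zeta * \theta_\eta = (\theta_\zeta\otimes\theta_\eta)\Delta$; both sides are then $*$-homomorphisms $SU_q(2)\to\mathbb{C}$, so I would only compute on generators: $(\theta_\zeta\otimes\theta_\eta)\Delta(\alpha) = \zeta\eta - 0 = \theta_{\zeta\eta}(\alpha)$ and $(\theta_\zeta\otimes\theta_\eta)\Delta(\gamma) = 0\cdot\eta + \bar\zeta\cdot 0 = 0 = \theta_{\zeta\eta}(\gamma)$.

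There is no genuine obstacle in this proposition; the entire argument is an exercise in unwinding the universal property and the generation of $SU_q(2)$ by $\alpha,\gamma$. The only point that deserves a moment's attention is remembering that being a CQG morphism demands compatibility with comultiplications in addition to the $*$-algebra structure, which is precisely where the explicit expressions for $\Delta(\alpha)$ and $\Delta(\gamma)$ enter.
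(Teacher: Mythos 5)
Your proposal is correct and follows essentially the same route as the paper: construct $\pi$ from the unitarity of the matrix with $\alpha=z$, $\gamma=0$ via the universal property, check the comultiplication intertwining on the generators, and define $\theta_\zeta$ as evaluation at $\zeta$ composed with $\pi$. The only (cosmetic) difference is in the last step: you verify $\theta_\zeta*\theta_\eta=\theta_{\zeta\eta}$ by a direct computation on the generators, whereas the paper first notes $\bar\theta_\zeta*\bar\theta_\eta=\bar\theta_{\zeta\eta}$ in $C(\mathbb{T})$ and transports this identity through $\pi$ using that $\pi$ is a CQG morphism; both are equally valid.
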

\begin{proof}
Consider $C(\mathbb{T})$ as a $C^{*}$-algebra in $B(L^{2}(\mathbb{T})).$ Putting $\alpha=z$ and $\gamma=0$ in~\eqref{mm}, we get a unitary matrix for all $q\in[0,1]$ and hence there is, by the definition of $SU_{q}(2),$ a homomorphism $\pi: SU_{q}(2)\rightarrow C(\mathbb{T}).$ It is obvious that $\pi$ is the unique homomorphism with the properties mention in the statement of the proposition We need now to check that $\pi$ is a homomorphism of CQG's. Using the formulas for $\Delta(\alpha)$ and $\Delta(\gamma),$ it is easy to see that $(\pi\otimes \pi )\Delta(\alpha)=z\otimes z=\Delta_{\mathbb{T}}(\pi(\alpha))$ and $(\pi\otimes\pi)\Delta(\gamma)=0=\Delta_{\mathbb{T}}(\pi(\gamma)).$ The rest follows from that $\alpha$ and $\gamma$ generates $SU_{q}(2).$
\\

For the second part, consider the evaluation at $\zeta\in\mathbb{T},$ $\bar{\theta}_{\zeta}:C(\mathbb{T})\rightarrow\mathbb{C}.$ It is easy to see that $\bar{\theta}_{\zeta}*\bar{\theta}_{\eta}=\bar{\theta}_{\zeta\eta}$ (the convolution is now made in $C(\mathbb{T})$). We can now define $\theta_{\zeta}(.)=\bar{\theta}_{\zeta}(\pi(.)).$ Again, uniqueness is easy to see.
As $\pi$ is a morphism of CQG's, we get $$\theta_{\zeta}*\theta_{\eta}(.)=(\bar{\theta}_{\zeta}*\bar{\theta}_{\eta})(\pi(.))=\bar{\theta}_{\zeta\eta}(\pi(.))=\theta_{\zeta\eta}(.).$$
\end{proof}
As we know, every FDR $W\in B(\mathcal{H})\otimes C(\mathbb{T})$ is equivalent to a group representation $\mu:\mathbb{T}\rightarrow B(\mathcal{H}).$ Moreover, irreducible representation of $\mathbb{T}$ is of the form $\zeta\mapsto \zeta^{k}$ for $k\in\mathbb{Z}.$ Thus every such representation $\mu$ decomposes $\mathcal{H}$ into a direct sum
$\mathcal{H}=\oplus \mathcal{H}_{k}$ such that the restriction $\mu |\mathcal{H}_{k}$ is of the form $\zeta\mapsto \zeta^{k}I.$
\\

For every  FDR $W\in B(\mathcal{H})\otimes C(\mathbb{T})$ we now define a function $m_{W}(k):\mathbb{Z}\rightarrow \mathbb{N},$ by $m_{W}(k)=\dim \mathcal{H}_{k}.$ The following formulas are left to the reader to prove :)
$$
\begin{array}{ccc}
m_{W\boxplus U}(k)=m_{W}(k)+m_{U}(k), & m_{W\boxtimes U}(k)=\sum_{m+n=k} m_{W}(m)m_{U}(n), & \forall k\in\mathbb{Z}.
\end{array}
$$ 
If we know use the morphism $\pi$ from Proposition~\ref{an}, we can map FDR of $SU_{q}(2)$ to FDR of $C(\mathbb{T})$ as in the discussion at the start of the this section. Further properties of this mapping (that was also discussed at the start) allows us to deduce this lemma:
\begin{lem}
Let $W$ be an FDR of $SU_{q}(2)$, let $M_{W}$ be the function $\mathbb{Z}\rightarrow\mathbb{N}$ defined by $M_{W}(k)=m_{(\iota\otimes \pi)(W)}(k),$ then
\begin{equation}\label{ll}
\begin{array}{ccc}
M_{W\boxplus U}(k)=M_{W}(k)+M_{U}(k), & M_{W\boxtimes U}(k)=\sum_{m+n=k} M_{W}(m)M_{U}(n), & \forall k\in\mathbb{Z}.
\end{array}
\end{equation}
We call $M_{W}$ the weight function of $W.$
\end{lem}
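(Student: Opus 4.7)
The plan is to simply combine two ingredients that have already been prepared in the preceding discussion, so there is nothing really new to prove. The point is that $M_W$ is defined in terms of $m_{(\iota\otimes\pi)(W)}$, so the formulas for $M$ reduce to the (already stated) formulas for $m$ in the $C(\mathbb{T})$ case together with the fact that pushing forward along the CQG morphism $\pi$ commutes with $\boxplus$ and $\boxtimes$.

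More precisely, the first step is to invoke the observation made at the beginning of Section 3: for any morphism of CQG's $\phi:C(G)\to C(F)$ and any FDR's $U,W$ of $G$, one has
\[
(\iota\otimes\phi)(U\boxplus W)=(\iota\otimes\phi)(U)\boxplus(\iota\otimes\phi)(W),\quad (\iota\otimes\phi)(U\boxtimes W)=(\iota\otimes\phi)(U)\boxtimes(\iota\otimes\phi)(W).
\]
Applying this to $\phi=\pi:SU_{q}(2)\to C(\mathbb{T})$ from Proposition~\ref{an} gives $(\iota\otimes\pi)(W\boxplus U)=(\iota\otimes\pi)(W)\boxplus(\iota\otimes\pi)(U)$ and the analogous identity for $\boxtimes$.

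The second step is to apply, to the right-hand sides of these identities, the two formulas for $m_{W\boxplus U}(k)$ and $m_{W\boxtimes U}(k)$ stated (as an exercise for the reader) just above the lemma. Unpacking the definition $M_{W}(k)=m_{(\iota\otimes\pi)(W)}(k)$, this yields exactly~\eqref{ll}.

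There is no real obstacle here: the only mildly nontrivial content lies in the unstated exercises, namely that ordinary FDR's of $\mathbb{T}$ decompose according to characters $\zeta\mapsto\zeta^{k}$, that multiplicities add under direct sum, and that they convolve under tensor product (which in turn follows from the standard fact that the character of a tensor product is the product of characters, so the weights of $W\boxtimes U$ are the sums of weights of $W$ and $U$ counted with multiplicity). Once those are accepted, the lemma is immediate and I would write it as a two-line proof.
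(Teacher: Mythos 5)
Your proposal is correct and is exactly the argument the paper intends: the lemma is stated without a written proof, being deduced from the compatibility of $(\iota\otimes\phi)$ with $\boxplus$ and $\boxtimes$ noted at the start of Section 3 together with the formulas for $m_{W\boxplus U}$ and $m_{W\boxtimes U}$ left as an exercise just before the lemma. Nothing is missing.
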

We can now formulate the main theorem.
\begin{thm}[Woronowicz]\label{woroz}
Let $q\in(0,1).$ Up to equivalence, there is for each $n\in\mathbb{N}$ just one irreducible representation $U_{n}$ of dimension $n+1.$ The weight function $M_{(n)}:=M_{U_{n}}$ of $U_{n}$ is given by the formula
\begin{equation}\label{wf}
M_{(n)}(k)=\left\{
\begin{array}{ccc}
1, & k\in\{-n,-n+2,...,n-2,n\}\\
0, & \text{otherwise}
\end{array}\right\}.
\end{equation}
Furthermore, a unitary FDR $W$ is up to equivalence determined by its weight function.

\end{thm}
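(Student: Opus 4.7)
The plan is to mirror the classical $SU(2)$ analysis from Section 1, with the weight function $M_W$ playing the role of the character. The two general CQG facts I will rely on are: (i) every unitary FDR of a CQG decomposes as a direct sum of irreducibles (Peter--Weyl), and (ii) the linear span of matrix coefficients of all pairwise inequivalent irreducibles equals the underlying $*$-Hopf algebra $SU_q^0(2)$.

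\textbf{Construction by induction.} Let $U_0$ be the trivial 1-dimensional representation and set $U_1 := U$, the defining unitary of~\eqref{mm}. Applying $(\iota\otimes\pi)$ to $U_1$ yields $\mathrm{diag}(z, z^{-1})$, so $M_{U_1} = M_{(1)}$. For irreducibility of $U_1$: were $U_1 \cong A \boxplus B$ with $\dim A = \dim B = 1$, then $A, B$ would correspond to group-like elements $a, b \in SU_q(2)$ and all four entries $\alpha, -q\gamma^*, \gamma, \alpha^*$ of $U_1$ would lie in $\spann(a, b)$, contradicting the linear independence of $\alpha, \alpha^*, \gamma, \gamma^*$ from Lemma~\ref{bar}. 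Assume inductively that $U_0, \ldots, U_{n-1}$ have been constructed irreducible with $M_{U_k} = M_{(k)}$. A direct convolution using~\eqref{ll} gives
\begin{equation*}
M_{U_{n-1}\boxtimes U_1} \;=\; M_{(n-1)} * M_{(1)} \;=\; M_{(n)} + M_{(n-2)},
\end{equation*}
with the convention $M_{(-1)} \equiv 0$. Peter--Weyl decomposes $V := U_{n-1}\boxtimes U_1$ into irreducible summands; since the weight $n$ occurs in $V$ with multiplicity $1$ but lies outside the support of $M_{(k)}$ for every $k < n$, exactly one summand must be a new irreducible carrying this highest weight. Define $U_n$ to be this summand; matching weight functions against the induction hypothesis then forces $V \cong U_n \boxplus U_{n-2}$, whence $M_{U_n} = M_{(n)}$ and $\dim U_n = n+1$.

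\textbf{Uniqueness and exhaustion.} The weight functions $\{M_{(n)}\}_{n \geq 0}$ are linearly independent: $M_{(n)}(n) = 1$ while $M_{(m)}(n) = 0$ for $m < n$, so the expansion matrix is upper triangular with unit diagonal. Hence, in any Peter--Weyl decomposition $W \cong \bigoplus_\alpha n_\alpha V_\alpha$ into pairwise inequivalent irreducibles, the multiplicities $n_\alpha$ (and therefore $W$ up to equivalence) are determined by $M_W$ once all irreducibles are known. For exhaustion, I show that $\{U_n\}_{n \geq 0}$ is the complete list: the linear span of matrix coefficients of this family is closed under products (matrix coefficients of $U_m\boxtimes U_n$ being products of those of $U_m$ and $U_n$) and under the $*$-operation, and contains the generators $\alpha, \gamma$ coming from $U_1$; by Lemma~\ref{bar} this span is all of $SU_q^0(2)$, and by the Peter--Weyl density theorem for CQGs no further inequivalent irreducible can exist. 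Combining these facts proves both the classification and the statement that a unitary FDR is determined up to equivalence by its weight function.

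\textbf{Main obstacle.} The delicate step is the identification $V \cong U_n \boxplus U_{n-2}$ in the inductive construction: a priori the Peter--Weyl decomposition of $V$ could involve extra multiplicities of previously constructed $U_k$, or hitherto-unknown irreducibles whose weight supports still lie inside $\{-n, \ldots, n\}$. The cleanest way to rule this out is to interleave the exhaustion argument with the induction, so the full list of irreducibles is available at each stage; alternatively, one may invoke Schur orthogonality of characters in CQGs, computing $h(\chi_V \chi_V^*) = 2$ via the Haar state $h$ to certify that $V$ has exactly two irreducible summands.
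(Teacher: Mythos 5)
You have correctly identified the weak point yourself, and unfortunately it is a genuine gap, not a technicality. In the inductive step you know only that $M_{U_{n-1}\boxtimes U_1}=M_{(n)}+M_{(n-2)}$ and that some irreducible summand $W_1$ carries the weight $n$ with multiplicity one. Nothing in the weight bookkeeping forces $M_{W_1}=M_{(n)}$: a priori $W_1$ could have weight support $\{n,-n\}$ only, or $\{n,n-2,-n\}$, with the remaining weights absorbed by other, hitherto unknown irreducibles whose supports lie inside $\{-n,\dots,n\}$. What is missing is precisely the highest-weight statement that the weights of \emph{any} irreducible form an unbroken string $\{-m,-m+2,\dots,m\}$, each with multiplicity one --- but that is essentially the content of~\eqref{wf}, i.e.\ the thing being proved. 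Your two proposed repairs do not close the circle: interleaving the exhaustion argument only tells you that every irreducible occurs in some tensor power of $U_1$, not what its weight function is; and the character computation $h(\chi_V\chi_V^*)=2$ both presupposes the Haar state (whose evaluation on the basis of Lemma~\ref{bar} is itself a nontrivial computation not available at this point in the paper) and, even granted, only bounds the number of summands at two --- it still does not determine how the weights distribute between them without the string property.

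The paper avoids this by supplying the quantum analogue of the $\su$ analysis from Section~1, which is exactly the ingredient your argument lacks. Existence is handled by an explicit construction ($U_n$ acts on the span of $\alpha^k\gamma^{*(n-k)}$, $0\le k\le n$, which is shown to be a right coideal), so no tensor-product induction is needed. Uniqueness is handled by introducing the functionals $\chi_0,\chi_1,\chi_2,f_0,f_1,f_2$ from a triangular $4$-dimensional representation, proving the commutation relations of Proposition~\ref{conprop} (the deformed $[h,e]=2e$, etc.), and showing via the derivation $d$ and Proposition~\ref{amatrix} that intertwiners of FDRs coincide with intertwiners of the matrix triples $A_k^U=(\iota\otimes\chi_k)U$. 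This reduces the classification to a finite-dimensional eigenvector argument exactly parallel to the one for $\su$, which is what delivers the string structure of the weights and hence the uniqueness per dimension. Your final step (a unitary FDR is determined by $M_W$, via upper-triangularity of the $M_{(n)}$) and your exhaustion argument are both fine and close to the paper's, but they only become valid once the classification of irreducibles --- the part with the gap --- is in place.
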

\begin{rem}
This is of course true for $q=1$ also, but as this is well known and since we want to avoid devoting a part of every proof to prove it in the case $q=1,$ we have omitted the proof of this case.
\end{rem}
\begin{subsection}{Proof of Theorem~\ref{woroz}}
Be begin first by finding candidates for the the sequence of representations mentioned in the statement of Theorem~\ref{woroz}.
\begin{defn}
For $n\in\mathbb{N},$ define $V_{n}\subseteq SU^{0}_{q}(2)$ as the linear subspace of $SU^{0}_{q}(2)$ spanned by the basis vectors $\alpha^{k}\gamma^{*(n-k)}$ for $0\leq k\leq n.$
\end{defn}

\begin{lem}
The vector space $V_{n}$ is a right ideal in $\tilde{SU_{q}(2)}$ i.e $\Delta(V_{n})\subseteq V_{n}\otimes \tilde{SU_{q}(2)}$
\end{lem}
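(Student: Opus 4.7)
The plan is to reduce the claim to a direct computation on the algebra generators using that $\Delta$ is a $*$-homomorphism. First I would observe that the two comultiplication formulas
\[
\Delta(\alpha) = \alpha\otimes\alpha - q\gamma^{*}\otimes\gamma, \qquad \Delta(\gamma^{*}) = \gamma^{*}\otimes\alpha^{*} + \alpha\otimes\gamma^{*},
\]
both have their \emph{first} tensor leg lying in $V_{1} = \mathrm{span}\{\alpha,\gamma^{*}\}$. So each factor in the expansion of
\[
\Delta(\alpha^{k}\gamma^{*(n-k)}) = \Delta(\alpha)^{k}\,\Delta(\gamma^{*})^{n-k}
\]
contributes one element of $V_{1}$ to the first tensor leg. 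Multiplying everything out in $SU_{q}(2)\otimes SU_{q}(2)$, every resulting term has first tensor leg a product of exactly $n$ elements taken from $\{\alpha,\gamma^{*}\}$ (in some order), tensored with something in $SU_{q}(2)$.

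The second step is to verify the key algebraic claim: any length-$n$ word in the letters $\alpha$ and $\gamma^{*}$ lies in $V_{n}$. This follows from the single commutation relation $\alpha\gamma^{*} = q\gamma^{*}\alpha$ from~\eqref{nn}, which lets one move every $\alpha$ past every $\gamma^{*}$ at the cost of a scalar power of $q$. Hence any such word equals $q^{c}\alpha^{j}\gamma^{*(n-j)}$, with $j$ the number of $\alpha$'s in the word and $c$ the number of transpositions performed, and this is visibly a basis vector of $V_{n}$.

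Combining the two observations, each term in the expansion of $\Delta(\alpha^{k}\gamma^{*(n-k)})$ lies in $V_{n}\otimes SU_{q}(2)$, and therefore so does the sum. Since $\Delta$ is linear and $V_{n}$ is spanned by the vectors $\alpha^{k}\gamma^{*(n-k)}$ for $0\le k\le n$, this yields $\Delta(V_{n})\subseteq V_{n}\otimes\widetilde{SU_{q}(2)}$, as claimed.

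There is no real obstacle here — the argument is essentially a bookkeeping exercise. The only thing to be mildly careful about is that the reordering in the first tensor leg uses only the relation $\alpha\gamma^{*}=q\gamma^{*}\alpha$ and not, for instance, the self-adjointness relations involving $\alpha^{*}$, which never appear in the first leg precisely because $\Delta(\alpha)$ and $\Delta(\gamma^{*})$ were chosen; this is why the argument works for $V_{n}$ spanned by $\alpha^{k}\gamma^{*(n-k)}$ but would not work if one replaced $\gamma^{*}$ by $\gamma$ (or $\alpha$ by $\alpha^{*}$).
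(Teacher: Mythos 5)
Your proof is correct and follows the same route as the paper: expand $\Delta(\alpha^{k}\gamma^{*(n-k)})=\Delta(\alpha)^{k}\Delta(\gamma^{*})^{n-k}$, note that every first tensor leg is a word in $\alpha,\gamma^{*}$, and reorder it into $q^{c}\alpha^{j}\gamma^{*(n-j)}$ using $\alpha\gamma^{*}=q\gamma^{*}\alpha$. You merely spell out the reordering step that the paper dismisses as ``not hard to see,'' and your closing remark about why the argument hinges on the first legs avoiding $\alpha^{*}$ and $\gamma$ is a nice touch.
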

\begin{proof}
By multiplicity of the coproduct, we have $$\Delta(\alpha^{k}\gamma^{*(n-k)})=\Delta(\alpha)^{k}\Delta(\gamma^{*})^{n-k}=$$
$$(\alpha\otimes\alpha-q \gamma^{*}\otimes \gamma)^{k}(\gamma^{*}\otimes \alpha+\alpha\otimes \gamma^{*})^{n-k}.$$
It is not hard to see that the left tensor factors in the expansion of this sum can, by the use of the commutations relations in $\tilde{SU}_{q}(2)$ be written as $\alpha^{j}\gamma^{*(n-j)},$ for $0 \leq j\leq n.$ 
\end{proof}
We now show how to define a corepresentation on $V_{n}.$ From the above lemma, we know that $\Delta(\alpha^{k}\gamma^{*(n-k)})=\sum_{j=0}^{n}\alpha^{j}\gamma^{*(n-j)}\otimes u_{j,k}$ for elements $u_{j,k}\in SU^{0}_{q}(2).$
\begin{defn}\label{un}
For each $V_{n},$ we define $U_{n}:=(u_{j,k})_{j,k}\in M(V_{n})\otimes SU^{0}_{q}(2).$
\end{defn}

\begin{prop}
The matrix $U_{n}$ is a representation of $SU_{q}(2)$ for every $n\in\mathbb{N}.$ Furthermore, the weight function of $U_{n}$ is the function $M_{(n)}(k)$ from Theorem~\ref{woroz}.
\end{prop}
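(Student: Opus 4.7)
My plan is to exploit the defining relation $\Delta(\alpha^{k}\gamma^{*(n-k)}) = \sum_{j=0}^{n} \alpha^{j}\gamma^{*(n-j)} \otimes u_{j,k}$ three times, in three different ways, and each time use the linear independence of $\{\alpha^{j}\gamma^{*(n-j)}\}_{0\leq j\leq n}$ (guaranteed by Lemma~\ref{bar}) to read off information about the entries $u_{j,k}$.

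First, I would verify the corepresentation identity. Applying $\iota\otimes\Delta$ and $\Delta\otimes\iota$ to the defining relation, the left-hand sides coincide by coassociativity, so
\[
\sum_{i} \alpha^{i}\gamma^{*(n-i)}\otimes\Delta(u_{i,k}) \;=\; \sum_{i,j}\alpha^{i}\gamma^{*(n-i)}\otimes u_{i,j}\otimes u_{j,k}.
\]
Linear independence gives $\Delta(u_{i,k})=\sum_{j} u_{i,j}\otimes u_{j,k}$, which is exactly the corepresentation property $(\iota\otimes\Delta)(U_{n})=(U_{n})_{1,2}(U_{n})_{1,3}$ written component-wise.

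Next, I would apply $\iota\otimes\epsilon$ to the same relation; using $(\iota\otimes\epsilon)\Delta=\iota$ on the left and linear independence on the right yields $\epsilon(u_{i,k})=\delta_{i,k}$. Combined with the antipode axiom $m(S\otimes\iota)\Delta(u_{i,k})=\epsilon(u_{i,k})I$ (and similarly on the other side), this shows that the matrix $(S(u_{j,i}))_{i,j}$ is a two-sided inverse of $U_{n}$, so $U_{n}$ is an invertible element of $M_{n+1}(\mathbb{C})\otimes SU_{q}^{0}(2)\subseteq M_{n+1}(\mathbb{C})\otimes SU_{q}(2)$, i.e.\ a finite-dimensional representation in the sense of this paper.

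Finally, for the weight function, I would apply $\iota\otimes\pi$ to the defining relation, where $\pi$ is the morphism of Proposition~\ref{an}. Since $\pi(\gamma)=\pi(\gamma^{*})=0$ and $\pi(\alpha)=z$, $\pi(\alpha^{*})=z^{-1}$, the formulas for $\Delta(\alpha)$ and $\Delta(\gamma^{*})$ collapse:
\[
(\iota\otimes\pi)\Delta(\alpha)=\alpha\otimes z,\qquad (\iota\otimes\pi)\Delta(\gamma^{*})=\gamma^{*}\otimes z^{-1},
\]
and because $z, z^{-1}\in C(\mathbb{T})$ are central, multiplying $k$ copies of the first and $n-k$ copies of the second gives $(\iota\otimes\pi)\Delta(\alpha^{k}\gamma^{*(n-k)})=\alpha^{k}\gamma^{*(n-k)}\otimes z^{2k-n}$. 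Equating with $\sum_{j}\alpha^{j}\gamma^{*(n-j)}\otimes\pi(u_{j,k})$ and invoking linear independence one last time gives $\pi(u_{j,k})=\delta_{j,k}\,z^{2k-n}$. Thus $(\iota\otimes\pi)(U_{n})$ is the diagonal matrix with entries $z^{-n}, z^{-n+2},\dots,z^{n-2},z^{n}$, from which the weight function in~\eqref{wf} can be read off immediately.

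The whole argument is really one trick (read off coefficients against the basis $\{\alpha^{j}\gamma^{*(n-j)}\}$) applied to three different operators $\iota\otimes\Delta$, $\iota\otimes\epsilon$, $\iota\otimes\pi$, so I do not anticipate a serious obstacle; the only step that requires any care is keeping track of the order of multiplication and the signs/powers of $q$ when expanding $(\iota\otimes\pi)\Delta(\alpha^{k}\gamma^{*(n-k)})$, but the vanishing of $\pi(\gamma^{*})$ kills every cross term and makes the computation essentially trivial.
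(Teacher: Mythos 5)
Your proposal is correct and follows essentially the same route as the paper: coassociativity plus linear independence of the $\alpha^{j}\gamma^{*(n-j)}$ for the corepresentation identity, the antipode axiom for the two-sided inverse, and the morphism $\pi$ for the weight function. (Your exponent $z^{2k-n}$ is actually the correct one --- the paper writes $z^{n-2k}$ --- but since $k$ runs over $0,\dots,n$ both yield the same set of diagonal entries, so the conclusion about $M_{(n)}$ is unchanged.)
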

\begin{proof}
We start by checking the property $I\otimes\Delta(U_{n})=(U_{n})_{1,2}(U_{n})_{1,3}.$ This is equivalent to checking 
\begin{equation}\label{eq2}
\Delta(u_{j,k})=\sum_{m=0}^{n}u_{j,m}\otimes u_{m,k}.
\end{equation}
Consider the sum 
\begin{equation}\label{eq1}
\sum_{j=0}^{n}\alpha^{j}\gamma^{*(n-j)}\otimes \Delta( u_{j,k}).
\end{equation}
By the definition of $u_{j,k},$ the sum~\eqref{eq1} is the same as $(I\otimes \Delta)\Delta(\alpha^{k}\gamma^{*(n-k)}).$ By coassociativity, we now have
$$(I\otimes \Delta)\Delta(\alpha^{k}\gamma^{*(n-k)})=( \Delta \otimes I)\Delta(\alpha^{k}\gamma^{*(n-k)})=\sum_{j,m=0}^{n}\alpha^{m}\gamma^{*(n-m)} \otimes u_{m,j}\otimes u_{j,k} .$$
Comparison of the basis vectors in the foremost left tensor factor now gives~\eqref{eq2}.
\\

We are going to show invertibility of $U_{n}$ by finding an inverse. Lets look at the matrix $(S(u_{j,k}))_{j,k}=I\otimes S(U_{n}).$ By above
$$\sum_{j,m=0}^{n}\alpha^{m}\gamma^{*(n-m)} \otimes S(u_{m,j}) u_{j,k}=(I\otimes m)(I\otimes S\otimes I)(I\otimes \Delta)\Delta(\alpha^{k}\gamma^{*(n-k)}),$$
where $m$ is the multiplication operator $SU_{q}^{0}(2)\otimes SU_{q}^{0}(2)\rightarrow SU_{q}^{0}(2).$
A property of the antipode is that $m(I\otimes S)\Delta=m(S\otimes I)\Delta=\epsilon.$ Hence 
$$(I\otimes m)(I\otimes S\otimes I)(I\otimes \Delta)\Delta(\alpha^{k}\gamma^{*(n-k)})=(I\otimes \epsilon)\Delta(\alpha^{k}\gamma^{*(n-k)})=\alpha^{k}\gamma^{*(n-k)}.$$
This gives $\sum_{j,m=0}^{n}\alpha^{m}\gamma^{*(n-m)} \otimes S(u_{m,j}) u_{j,k}=\sum_{j,m=0}^{n}\alpha^{m}\gamma^{*(n-m)} \otimes \delta_{m,k}$ and so $(S(u_{j,k}))_{j,k}$ is a left inverse for $U_{n}.$ Similar arguments shows that it is also a right inverse.
\\

To prove the claims about the weight function, recall the homomorphism $\pi:SU_{q}(2)\rightarrow C(\mathbb{T})$ in Proposition~\ref{an}.
The $*$-homomorphism $(I\otimes \pi)\Delta :SU_{q}\rightarrow SU_{q}(2)\otimes C(\mathbb{T}) $ maps $\alpha\mapsto \alpha \otimes z$ and $\gamma \mapsto \gamma\otimes z.$ Hence $(I\otimes \pi)\Delta(\alpha^{k}\gamma^{*(n-k)})=\alpha^{k}\gamma^{*(n-k)}\otimes z^{n-2k}.$ But we have also 
$$(I\otimes \pi)\Delta(\alpha^{k}\gamma^{*(n-k)})=\sum_{j=0}^{n}\alpha^{j}\gamma^{*(n-j)}\otimes \pi(u_{j,k}).$$
In conclusion, the matrix $(I\otimes \pi)(U_{n})$ is diagonal with diagonal elements 

$\{z^{-n},z^{2-n},...,z^{n-2},z^{n}\}.$ This shows that $M_{U_{n}}$ must have the claimed form.
\end{proof}
So we now have constructed some good candidates for the irreducible representations of $SU_{q}(2).$ What is left to prove is that $U_{n}$ is actually irreducible and that any other irreducible representation $U$ of the same dimension is equivalent to $U_{n}$. This is the tricky part.

\begin{defn}
Let $R$ be an algebra over $\mathbb{C}.$ A $SU_{q}^{0}(2)$-point of $R$ is a 4-tuple 

$a,a^{(*)},c,c^{(*)}\in R$ such that 
\begin{equation}\label{nn}
\begin{array}{ccc}
a^{(*)} a+c^{(*)} c=I, & a a^{(*)}+ q^{2}c^{(*)} c=I, & c^{(*)}c=c c^{(*)},\\
a c=q c a, & a c^{(*)}=q c^{(*)} a.\\
c^{(*)}a^{(*)} =q a^{(*)}c^{(*)}, &  c a^{(*)}=q a^{(*)} c. 
\end{array}
\end{equation}
(note that we do not necessarily have an involution on $R,$ the $(*)$ is just a symbol).
\end{defn}

\begin{lem}
If $R$ is an algebra over $\mathbb{C},$ then set of $SU_{q}^{0}(2)$-points of $R$ is isomorphic to $Hom(SU_{q}^{0}(2),R)$ (the set of \textbf{algebra} morphism $SU_{q}^{0}(2)\rightarrow R$) 
\end{lem}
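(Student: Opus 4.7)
The plan is to identify $SU_q^0(2)$ with a presentation of an algebra by generators and relations without any reference to the $*$-structure, after which the desired bijection follows from the universal property of the free algebra.

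The straightforward direction sends $\phi\in\mathrm{Hom}(SU_q^0(2),R)$ to the tuple $a:=\phi(\alpha)$, $a^{(*)}:=\phi(\alpha^{*})$, $c:=\phi(\gamma)$, $c^{(*)}:=\phi(\gamma^{*})$. The first five relations in the definition hold because they are the defining relations of $SU_q^0(2)$, and the two remaining ones, $c^{(*)}a^{(*)}=q\,a^{(*)}c^{(*)}$ and $c a^{(*)}=q\,a^{(*)}c$, are obtained inside $SU_q^0(2)$ by applying the involution $*$ to $\alpha\gamma=q\gamma\alpha$ and $\alpha\gamma^{*}=q\gamma^{*}\alpha$ respectively. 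Thus we indeed land among the $SU_q^0(2)$-points of $R$.

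For the reverse direction, let $F$ be the free unital $\mathbb{C}$-algebra on four generators $\tilde{a},\tilde{a}^{(*)},\tilde{c},\tilde{c}^{(*)}$ (no involution is assumed on $F$) and let $J\subseteq F$ be the two-sided ideal generated by the seven listed relations. By the universal property of $F$, an algebra morphism $F/J\to R$ is precisely the data of an $SU_q^0(2)$-point of $R$, so it is enough to produce an algebra isomorphism $F/J\cong SU_q^0(2)$. The canonical morphism $F\to SU_q^0(2)$ sending each tilded generator to the corresponding element kills $J$ and therefore descends to a surjection $\bar{\phi}\colon F/J\to SU_q^0(2)$.

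To prove $\bar{\phi}$ is injective I would mimic the spanning argument in Lemma~\ref{bar}. All commutation moves used there, including the consequences $\alpha^{*}\gamma=q^{-1}\gamma\alpha^{*}$ and $\alpha^{*}\gamma^{*}=q^{-1}\gamma^{*}\alpha^{*}$, are encoded in the seven generators of $J$; consequently the images of
$$
\tilde{\Lambda}_{k,n,m}:=\begin{cases}\tilde{a}^{k}\tilde{c}^{(*)n}\tilde{c}^{m},& k\geq 0,\\ \tilde{a}^{(*)(-k)}\tilde{c}^{(*)n}\tilde{c}^{m},& k<0,\end{cases}
$$
span $F/J$ as a vector space. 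Since $\bar{\phi}(\tilde{\Lambda}_{k,n,m})=A_{k,n,m}$ and Lemma~\ref{bar} asserts that $\{A_{k,n,m}\}$ is a basis of $SU_q^0(2)$, the family $\{\tilde{\Lambda}_{k,n,m}\}$ is forced to be linearly independent in $F/J$, and $\bar{\phi}$ is an isomorphism. The two assignments are mutually inverse by construction, as both are determined by where the four chosen generators go. The only delicate point, which is also the main obstacle, is the necessity of including the two $*$-derived relations in the definition of an $SU_q^0(2)$-point: without them the spanning reduction in $F/J$ would fail and $F/J$ would be strictly larger than $SU_q^0(2)$, so the reverse map would not exist as stated.
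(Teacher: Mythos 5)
Your proposal is correct and is essentially the paper's intended argument: the paper's proof is the one-line remark that the lemma ``follows easily from Lemma~\ref{bar}'', and your expansion (free algebra on four non-involutive generators modulo the seven relations, spanning by the ordered monomials, linear independence transported back via the basis $\{A_{k,n,m}\}$) is exactly the mechanism the paper already deploys for $W/I\cong SU_{q}^{0}(2)$ in the proof that $SU_{q}^{0}(2)$ is a $*$-Hopf algebra. Your observation that the two $*$-derived commutation relations must be postulated separately in the involution-free setting is precisely the point of the definition of an $SU_{q}^{0}(2)$-point.
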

\begin{proof}
This follows easily from Lemma~\ref{bar}.
\end{proof}
We now a define a $SU_{q}^{0}(2)$-point in $M_{4}(\mathbb{C})$ that will be essential in the constructions to come. Let
$$a_{m}=\left[ 
\begin{array}{cccc}
1 & 0 & 1 & 0\\
0 & q^{-1} & 0 & 0\\
0 & 0 & q^{-2} & 0\\
0 & 0 & 0 & q^{-1}\\

\end{array} 
\right],a_{m}^{(*)}=\left[ 
\begin{array}{cccc}
1 & 0 &- q^{2} & 0\\
0 & q & 0 & 0\\
0 & 0 & q^{2} & 0\\
0 & 0 & 0 & q\\

\end{array} 
\right]$$
$$
,c_{m}=\left[ 
\begin{array}{cccc}
0 & 0 & 0 & -q\\
0 & 0 & 0 & 0\\
0 & 0 & 0 & 0\\
0 & 0 & 0 & 0\\

\end{array} 
\right]
,c_{m}^{(*)}=\left[ 
\begin{array}{cccc}
0 & -q^{-1} & 0 & 0\\
0 & 0 & 0 & 0\\
0 & 0 & 0 & 0\\
0 & 0 & 0 & 0\\

\end{array} 
\right]
.
$$
\begin{lem}
$a_{m},a_{m}^{(*)},c_{m},c_{m}^{(*)}\in M_{4}(\mathbb{C})$ is a $SU_{q}^{0}$ point of $M_{4}(\mathbb{C}).$ 
\end{lem}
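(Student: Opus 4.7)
The statement is a concrete verification in $M_4(\mathbb{C})$, so my plan is simply to check each of the seven defining relations of an $SU_q^0(2)$-point for the given four matrices. The only organisational insight needed is that $c_m$ and $c_m^{(*)}$ are extremely sparse: $c_m$ has a single nonzero entry at position $(1,4)$ equal to $-q$, and $c_m^{(*)}$ has a single nonzero entry at position $(1,2)$ equal to $-q^{-1}$. Almost every matrix product one has to form therefore collapses to a single scalar times a matrix unit $E_{1,j}$, which makes the bookkeeping very short.

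The first consequence of this sparsity is that both products $c_m c_m^{(*)}$ and $c_m^{(*)} c_m$ vanish identically, because in each of them the column of the first factor's nonzero entry never matches the row of the second's. This immediately gives the commutation $c^{(*)}c = cc^{(*)}$ (both sides are zero) and simplifies the two ``unitarity'' identities
\[
a^{(*)}a + c^{(*)}c = I, \qquad aa^{(*)} + q^2 c^{(*)}c = I
\]
to the single claim that $a_m$ and $a_m^{(*)}$ are two-sided inverses of each other. This I would verify directly: both matrices are upper triangular, each product is supported on the main diagonal and at position $(1,3)$, the diagonals $(1, q^{-1}, q^{-2}, q^{-1})$ of $a_m$ and $(1, q, q^2, q)$ of $a_m^{(*)}$ are mutual reciprocals, and the $(1,3)$-entry in each product is a two-term sum in which the contribution $1 \cdot (\pm q^2)$ from the $(1,3)$ entry cancels the contribution $(\pm 1) \cdot q^{\mp 2}$ from the $(3,3)$ diagonal.

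For the remaining four $q$-commutation identities
\[
ac = qca,\qquad ac^{(*)} = qc^{(*)}a, \qquad c^{(*)}a^{(*)} = q a^{(*)} c^{(*)}, \qquad ca^{(*)} = q a^{(*)} c,
\]
I would again exploit the single-entry support of $c_m$ and $c_m^{(*)}$. Each side of each relation is a scalar multiple of a single matrix unit $E_{1,j}$ with $j \in \{2,4\}$, and that scalar is $(a_m)_{1,1}$ or $(a_m)_{4,4}$ on one side and $(a_m)_{2,2}$ (or the analogous entries of $a_m^{(*)}$) on the other. Each identity thus collapses to a trivial comparison of powers of $q$, e.g.\ $1\cdot(-q) = q\cdot(-q\cdot q^{-1})$ for $ac = qca$.

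Main obstacle: there is none of a conceptual nature — this is pure bookkeeping. The only possible pitfall is keeping track of signs and of the direction in which $q$-powers shift, but the one-dimensional support of $c_m$ and $c_m^{(*)}$ reduces every verification to the inspection of a single matrix entry, so the whole proof should fit on a small number of lines once the sparsity observation is in hand.
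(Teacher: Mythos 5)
Your verification is correct and follows exactly the same route as the paper: observe that $a_m$ and $a_m^{(*)}$ are mutual inverses and that $c_m c_m^{(*)} = c_m^{(*)} c_m = 0$, which settles the first row of relations, and then check the four $q$-commutation identities entry by entry. In fact you carry out in detail the part the paper explicitly ``leaves to the reader,'' and your sparsity bookkeeping (each product collapsing to a multiple of a matrix unit $E_{1,j}$) checks out.
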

\begin{proof}
Note that $a_{m}a_{m}^{(*)}=a_{m}^{(*)}a_{m}=I$ and $c_{m}c_{m}^{(*)}=c_{m}^{(*)}c_{m}=0,$ so the first row of equations in~\eqref{nn} is true.
The rest is an easy verification and is left to the reader :).
\end{proof}

So we have an algebra homomorphism $\phi:SU_{q}^{0}\rightarrow M_{4}(\mathbb{C}).$ Since $a_{m},a_{m}^{(*)},c_{m},c_{m}^{(*)}$ are all upper triangular, the range of $\phi$ consists of upper triangular matrices. More is true though, since the matrices $a_{m},a_{m}^{(*)},c_{m},c_{m}^{(*)}$ only has nonzero entries in the diagonal and the first row and it is not hard to see that such matrices is also a subalgebra of $M_{4}(\mathbb{C}).$ Thus for every $b\in SU_{q}^{0}(2),$ we have 
\begin{equation}\label{qeqe}
\phi(b)=\left[ 
\begin{array}{cccc}
\epsilon(b) & \chi_{0}(b) & \chi_{1}(b) & \chi_{2}(b)\\
0 & f_{0}(b) & 0 & 0\\
0 & 0 & f_{1}(b) & 0\\
0 & 0 & 0 & f_{2}(b)\\

\end{array} \right]
\end{equation}
for some linear functionals $\epsilon,f_{0},f_{1}, f_{2}, \chi_{0} ,\chi_{1}, \chi_{2}$ (we will soon prove that the $\epsilon$ in~\eqref{qeqe} is the same as the counit on $SU_{q}^{0}(2)$, hence the use of the same symbol).
\begin{prop}[Basic properties]
The linear functionals $\epsilon,f_{0},f_{1}, f_{2}, \chi_{0} ,\chi_{1}, \chi_{2}$ have the following properties.
\begin{enumerate}
\item  $f_{0},f_{1},f_{2}$ are algebra morphisms $SU_{q}^{0}(2)\rightarrow \mathbb{C}$ and $\epsilon$ is the same as the coproduct in $SU_{q}^{0}(2).$ Also $f_{0}=f_{2}.$
\item For each $\chi_{k}$ and $b,d\in SU_{q}^{0}(2),$ we have 
\begin{equation}\label{xi}
\chi_{k}(b d)=\epsilon(b)\chi_{k}(d)+\chi_{k}(b)f_{k}(d).
\end{equation}
\item We have 
\begin{equation}\label{lala}
\chi_{1}=\frac{q^{2}}{1-q^{2}}(f_{1}-\epsilon)
\end{equation}
\end{enumerate}
\end{prop}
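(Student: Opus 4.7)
The entire proof is driven by one observation: $\phi$ is an algebra homomorphism whose image sits in the subalgebra of $M_4(\mathbb{C})$ spanned by the identity diagonals and the first row, and the three claims are just the algebra-identity $\phi(bd)=\phi(b)\phi(d)$ read off entry-by-entry, combined with inspection of the values on the four generators.

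For part (1), the plan is to compute the product of two matrices of the form~\eqref{qeqe}. Since each $\phi(d)$ is upper-triangular with isolated off-diagonal entries only in the first row, the four diagonal entries of $\phi(b)\phi(d)$ are simply the products of the corresponding diagonal entries of the factors; this forces $\epsilon, f_0, f_1, f_2$ to be multiplicative, i.e.\ algebra morphisms $SU_q^0(2)\to\mathbb C$. Evaluating the $(1,1)$-entries of $a_m,a_m^{(*)},c_m,c_m^{(*)}$ gives $\epsilon(\alpha)=\epsilon(\alpha^*)=1$ and $\epsilon(\gamma)=\epsilon(\gamma^*)=0$, so $\epsilon$ agrees with the counit of Definition~\ref{er} on generators and hence everywhere. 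For $f_0=f_2$, reading off the $(2,2)$- and $(4,4)$-diagonal entries of the same four matrices shows they agree on $\alpha,\alpha^*,\gamma,\gamma^*$; since both are algebra morphisms, they coincide. Part (2) is obtained similarly by expanding the $(1,k+1)$-entry of $\phi(b)\phi(d)$: the only surviving terms from the upper-triangular shape of $\phi(d)$ are $\phi(b)_{1,1}\phi(d)_{1,k+1}+\phi(b)_{1,k+1}\phi(d)_{k+1,k+1}$, which gives exactly~\eqref{xi}.

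For part (3), I would set $\Psi:=\frac{q^{2}}{1-q^{2}}(f_{1}-\epsilon)$ and check two things. First, a one-line direct calculation (using that $f_1$ and $\epsilon$ are each multiplicative) verifies that $\Psi$ itself satisfies the same twisted Leibniz identity $\Psi(bd)=\epsilon(b)\Psi(d)+\Psi(b)f_1(d)$ as $\chi_1$. Second, reading off the $(1,3)$-entries of $a_m,a_m^{(*)},c_m,c_m^{(*)}$ and comparing with the values of $\Psi$ on the generators (the nontrivial verifications being $\frac{q^{2}}{1-q^{2}}(q^{-2}-1)=1$ and $\frac{q^{2}}{1-q^{2}}(q^{2}-1)=-q^{2}$), one sees that $\chi_1$ and $\Psi$ agree on $\alpha,\alpha^*,\gamma,\gamma^*$. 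The difference $D:=\chi_1-\Psi$ therefore satisfies~\eqref{xi} and vanishes on generators, and a short induction on word length in the generators (using~\eqref{xi} at each step) shows $D\equiv 0$ on all of $SU_q^0(2)$.

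The only conceptually subtle point — and hence the step I would highlight as the "main obstacle" — is the last one: neither $\chi_1$ nor $\Psi$ is multiplicative, so agreement on generators does not by itself force equality. One has to notice that the twisted Leibniz rule~\eqref{xi} with fixed $\epsilon$ and $f_1$ is \emph{linear} in the derivation-like term, so the space of solutions is closed under differences and any element of it that vanishes on generators vanishes on all monomials in the generators by induction; this is what legitimizes the generators-only check.
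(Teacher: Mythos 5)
Your proof is correct. Parts (1) and (2) follow the paper exactly: the diagonal entries of an upper-triangular product multiply, which makes $\epsilon,f_0,f_1,f_2$ multiplicative and reduces $\epsilon=\text{counit}$ and $f_0=f_2$ to a check on generators, and the first-row entries of $\phi(b)\phi(d)$ give~\eqref{xi} because $\phi(d)$ has nonzero entries only on the diagonal and in the first row. Part (3) is where you genuinely diverge. The paper observes that the single matrix with $1$ in position $(1,3)$ and $\tfrac{1-q^2}{q^2}$ in position $(3,3)$ (and zeros elsewhere) commutes with $a_m,a_m^{(*)},c_m,c_m^{(*)}$, hence with the entire range of $\phi$; reading the $(1,3)$-entry of the commutation identity yields $\epsilon(b)+\tfrac{1-q^2}{q^2}\chi_1(b)=f_1(b)$ for all $b$ in one stroke, with no induction. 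You instead verify that $\Psi=\tfrac{q^2}{1-q^2}(f_1-\epsilon)$ satisfies the same twisted Leibniz rule as $\chi_1$ (your one-line computation $\epsilon(b)\Psi(d)+\Psi(b)f_1(d)=\tfrac{q^2}{1-q^2}(f_1(b)f_1(d)-\epsilon(b)\epsilon(d))$ checks out), match values on generators and on $I$, and propagate by induction on word length using linearity of the solution space of~\eqref{xi}. Your route is more elementary and requires no inspiration --- you never have to find the commuting matrix --- and your closing remark correctly identifies the one subtle point, namely that non-multiplicative functionals agreeing on generators need the Leibniz structure to force global equality. The paper's route is slicker and avoids the induction, but relies on producing the commutant element ad hoc. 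Notably, the paper itself deploys exactly your uniqueness-from-generators argument for twisted derivations a few pages later when proving the convolution identities, so your method is entirely in the spirit of the text.
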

\begin{proof}
$(1)$ As the range of the homomorphism $\phi$ consists of upper triangular matrices, it follows that projection onto the diagonal elements must be homomorphisms to $\mathbb{C}.$ It is easy to see that $\epsilon$ is the same as the counit on the generators and hence on all of $SU_{q}^{0}(2).$ Similary, $f_{0}=f_{2}$ on the generators and so also equal on all of $SU_{q}^{0}(2).$
\\

$(2)$ This follows from a comparison of the first row elements in the equality
$$
\left[ 
\begin{array}{cccc}
\epsilon(b d) & \chi_{0}(b d) & \chi_{1}(b d) & \chi_{2}(b d)\\
0 & f_{0}(b d) & 0 & 0\\
0 & 0 & f_{1}(b d) & 0\\
0 & 0 & 0 & f_{2}(b d)\\

\end{array} \right]=
$$
$$
\left[ 
\begin{array}{cccc}
\epsilon(b) & \chi_{0}(b) & \chi_{1}(b) & \chi_{2}(b)\\
0 & f_{0}(b) & 0 & 0\\
0 & 0 & f_{1}(b) & 0\\
0 & 0 & 0 & f_{2}(b)\\

\end{array} \right].\left[ 
\begin{array}{cccc}
\epsilon(d) & \chi_{0}(d) & \chi_{1}(d) & \chi_{2}(d)\\
0 & f_{0}(d) & 0 & 0\\
0 & 0 & f_{1}(d) & 0\\
0 & 0 & 0 & f_{2}(d)\\

\end{array} \right].
$$
\\

$(3)$ An quick calculation showes that the matrices $a_{m},a_{m}^{(*)},c_{m},c_{m}^{(*)}$ all commutes with the matrix
$$\left[ 
\begin{array}{cccc}
0 & 0 & 1 & 0\\
0 & 0 & 0 & 0\\
0 & 0 & \frac{1-q^{2}}{q^{2}} & 0\\
0 & 0 & 0 & 0\\

\end{array} \right].$$ Thus all of the range of $\phi$ commutes with this matrix. $(3)$ follows now from comparing the elements in the equality
$$
\left[ 
\begin{array}{cccc}
\epsilon(b) & \chi_{0}(b) & \chi_{1}(b) & \chi_{2}(b)\\
0 & f_{0}(b) & 0 & 0\\
0 & 0 & f_{1}(b) & 0\\
0 & 0 & 0 & f_{2}(b)\\

\end{array} \right].\left[ 
\begin{array}{cccc}
0 & 0 & 1 & 0\\
0 & 0 & 0 & 0\\
0 & 0 & \frac{1-q^{2}}{q^{2}} & 0\\
0 & 0 & 0 & 0\\

\end{array} \right]=
$$
$$
\left[ 
\begin{array}{cccc}
0 & 0 & 1 & 0\\
0 & 0 & 0 & 0\\
0 & 0 & \frac{1-q^{2}}{q^{2}} & 0\\
0 & 0 & 0 & 0\\

\end{array} \right].\left[ 
\begin{array}{cccc}
\epsilon(b) & \chi_{0}(b) & \chi_{1}(b) & \chi_{2}(b)\\
0 & f_{0}(b) & 0 & 0\\
0 & 0 & f_{1}(b) & 0\\
0 & 0 & 0 & f_{2}(b)\\

\end{array} \right]
$$
\end{proof}
We now give some helpful tables relating to the functionals $f_{0},f_{1}, f_{2}, \chi_{0} ,\chi_{1}, \chi_{2}.$
\begin{equation}
\begin{tabular}{|r c l|}
  \hline
& & \\
  $f_{0}(\alpha)=q^{-1}$ & $f_{1}(\alpha)=q^{-2}$ &  $f_{0}(\alpha)=q^{-1}$\\
   $f_{0}(\gamma)=0$ & $f_{1}(\gamma)=0$ &  $f_{0}(\gamma) =0$\\ 
 $f_{0}(\alpha^{*})=q^{1}$ & $f_{1}(\alpha^{*})=q^{2}$ &  $f_{0}(\alpha^{*})=q^{1}$ \\
   $f_{0}(\gamma^{*})=$0 & $f_{1}(\gamma^{*})=0$ & $ f_{0}(\gamma^{*}) =0$\\
& & \\
  \hline
\end{tabular}
\end{equation}
\begin{equation}
\begin{tabular}{|r c l|}
  \hline
& Non-zero values of $\chi_{k}$ & \\
  $\chi_{0}(\gamma^{*})=-q^{-1}$ & $\chi_{1}(\alpha)=1 $ &  $\chi_{1}(\alpha^{*})=-q^{2}$\\
   $\chi_{2}(\gamma)=-q $ & & \\ 

& All other values of $\chi_{k}$ on the generators are zero & \\
  \hline
\end{tabular}
\end{equation}

Given $g,h\in (SU_{q}^{2}(2))^{*}$ and $b\in SU_{q}^{0}(2),$ recall that the convolution $g*h \in  (SU_{q}^{2}(2))^{*}$ between $g$ and $h$ was defined as $(g\otimes h) \Delta$, the left convolution $g*b\in SU_{q}^{0}(2)$ between $g$ and $b$ was defined as $(I\otimes g) \Delta(b)$ and the right convolution $b*g$ was $(g\otimes I)\Delta(b).$ The convolution is associative and makes $(SU_{q}^{2}(2))^{*}$ into an algebra with the product $*$.

\begin{equation}\label{chicon}
\begin{tabular}{|r c l|}
  \hline
& Convolutions with $\chi_{k}$ & \\
$\chi_{0}*\alpha=0$ & $\chi_{1}*\alpha=\alpha $ &  $\chi_{1}*\alpha=q^{2}\gamma^{*}$\\
$\chi_{0}*\gamma=0$ & $\chi_{1}*\gamma=\gamma $ &  $\chi_{1}*\gamma=-q\alpha^{*}$\\
$\chi_{0}*\alpha^{*}=\gamma^{*} $ & $\chi_{1}*\alpha^{*}=-q^{2}\alpha^{*} $ &  $\chi_{1}*\alpha^{*}=0$\\
$\chi_{0}*\gamma^{*}=-q^{-1}\alpha$ & $\chi_{1}*\gamma^{*}=-q^{2}\gamma^{*} $ &  $\chi_{1}*\gamma^{*}=0$\\

&  & \\
  \hline
\end{tabular}
\end{equation}
\begin{equation}
\begin{tabular}{|r c l|}
  \hline
& Convolutions with $f_{k}$ & \\
$f_{0}*\alpha=q^{-1}\alpha $ & $f_{1}*\alpha=q^{-2}\alpha $ &  $f_{2}$\\
$f_{0}*\gamma=q^{-1}\gamma$ & $f_{1}*\gamma=q^{-2}\gamma $ &  SAME\\
$f_{0}*\alpha^{*}=q\alpha^{*}$ & $f_{1}*\alpha^{*}=q^{2}\alpha^{*} $ &  AS\\
$f_{0}*\gamma^{*}=q\gamma^{*}$ & $f_{1}*\gamma^{*}=q^{2}\gamma^{*} $ &  $f_{0}$\\

&  & \\
  \hline
\end{tabular}
\end{equation}

 Recall also the we had an $*$-involution on $(SU_{q}^{2}(2))^{*}$ that was defined for $g\in(SU_{q}^{2}(2))^{*}$ as $g^{*}(b)=\overline{g((S(b))^{*})}.$ As $(S\otimes S)\Delta=\Delta^{op} S,$ we have for $g,h \in (SU_{q}^{2}(2))^{*}$ and $b\in SU_{q}^{0}(2)$ that 
$$(g^{*}*h^{*})(b)=\overline{(g\otimes h)((S\otimes S)\Delta(b))^{*}}=(g\otimes h)\Delta^{op}((S(b))^{*})=$$
$$(h\otimes g)\Delta((S(b))^{*})=(h*g)^{*}(b).$$
So $(h*g)^{*}=g^{*}*h^{*}$ and since also $(\lambda g)^{*}=\overline{\lambda}g^{*}$ for $\lambda\in\mathbb{C}$ and $(g+h)^{*}=g^{*}+h^{*},$  we have a proper $*$-involution on $(SU_{q}^{2}(2))^{*}$ with the convolution product.
\\

We will now prove some convolution identities of the functionals $f_{0},f_{1}, f_{2}, \chi_{0} ,\chi_{1}, \chi_{2}$ that will make up the technical heart of this proof.

\begin{prop}[Convolution properties]\label{conprop}
The following equalities holds:
\begin{enumerate}
\item $f_{0}*f_{0}=f_{1}$
\item 

\begin{enumerate}
\item
$-q\chi_{0}^{*}=\chi_{2}$
\item
$\chi_{1}^{*}=\chi_{1}$  

\end{enumerate}
\item
\begin{enumerate}

\item
$\chi_{0}*f_{0}=q^{2}f_{0}*\chi_{0}$

\item
 $q^{2}\chi_{2}*f_{0}=f_{0}*\chi_{2}$
\item
$\chi_{0}*f_{1}=q^{4}f_{1}*\chi_{0}$
\item
 $q^{4}\chi_{2}*f_{1}=f_{1}*\chi_{2}$
\item
$f_{0}*\chi_{1}=\chi_{1}*f_{0}$
\item
$f_{1}*\chi_{1}=\chi_{1}*f_{1}$
\end{enumerate}
\item
\begin{enumerate}

\item
$q \chi_{2}*\chi_{0}-q^{-1}\chi_{0}*\chi_{2}=\chi_{1}$
\item
$q^{2}\chi_{1}*\chi_{0}-q^{-2}\chi_{0}*\chi_{1}=(1+q^{2})\chi_{0}$
\item
$q^{2}\chi_{2}*\chi_{1}-q^{-2}\chi_{1}*\chi_{2}=(1+q^{2})\chi_{2}$ 
\end{enumerate}

\end{enumerate}
\end{prop}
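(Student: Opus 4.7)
The uniform strategy is as follows: each of $\chi_0,\chi_1,\chi_2$ is an $(\epsilon,f_k)$-skew-derivation (equation~(\ref{xi})), while each $f_k$ is a genuine algebra morphism. An easy induction on word length shows that any linear functional $g:SU_q^0(2)\to\mathbb{C}$ obeying a rule of the form $g(bd)=F(b)g(d)+g(b)G(d)$ with $F,G$ multiplicative is uniquely determined by its values on the four generators $\alpha,\alpha^*,\gamma,\gamma^*$. The whole proof is therefore organised as: for each equation, show that both sides satisfy a common rule of this shape, then compare them on the generators using the tables already established.

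Identity (1) is immediate: $f_0*f_0$ and $f_1$ are both $*$-algebra morphisms, and they take the same values on $\alpha$ and $\gamma$. For (2), using $(g*h)^*=h^**g^*$, one checks $f_k^*$ is again an algebra morphism, and using $\epsilon\circ S=\epsilon$ together with the values of $S$ on the generators one gets $f_k^*=f_k$. A direct expansion of $\chi_k^*(bd)=\overline{\chi_k((S(d)S(b))^*)}$ then yields the skew-derivation rule $\chi_k^*(bd)=\epsilon(b)\chi_k^*(d)+\chi_k^*(b)f_k(d)$, which is precisely the rule satisfied by $\chi_j$ (with $j=2$ when $k=0$, and $j=1$ when $k=1$). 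Verification on $\alpha,\alpha^*,\gamma,\gamma^*$ finishes (2a)--(2b).

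For (3), expanding both $(\chi_k*f_l)(bd)$ and $(f_l*\chi_k)(bd)$ using the comultiplication and the skew-derivation of $\chi_k$ shows that they both satisfy
$$g(bd)=f_l(b)\,g(d)+g(b)\,(f_k*f_l)(d).$$
Since the rule is common to both sides of each sub-identity, agreement on the generators (read off from the given convolution tables for $\chi_k*\alpha,\chi_k*\gamma$, etc.) closes (3a)--(3f).

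For (4), identities (4b) and (4c) collapse to pure algebraic manipulation once $\chi_1=\frac{q^2}{1-q^2}(f_1-\epsilon)$ from~(\ref{lala}) is substituted into the left-hand side: the cross-terms rearrange via (3c) (respectively (3d)), and the constants combine to give $(1+q^2)\chi_0$ (respectively $(1+q^2)\chi_2$). For (4a), I expand $(\chi_2*\chi_0)(bd)$ and $(\chi_0*\chi_2)(bd)$ into the four-term form
$$\epsilon(b)(\chi_i*\chi_j)(d)+\chi_j(b)(\chi_i*f_j)(d)+\chi_i(b)(f_i*\chi_j)(d)+(\chi_i*\chi_j)(b)(f_i*f_j)(d).$$
The specific coefficients $q$ and $-q^{-1}$ in (4a) are chosen precisely so that the two middle cross terms cancel, thanks to (3a)--(3b); together with $f_0*f_0=f_1$ from (1), this shows the LHS of (4a) satisfies the same $(\epsilon,f_1)$-Leibniz rule as $\chi_1$, and evaluation on the four generators using $\Delta(\alpha),\Delta(\gamma)$ closes the argument. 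The main obstacle is this cancellation in (4a): it is the very reason the coefficients $q,q^{-1}$ appear there, and it forces (3) to be proved first in the order listed.
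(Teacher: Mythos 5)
Your proposal is correct and follows essentially the same strategy as the paper: establish that two functionals obeying a common twisted Leibniz rule $g(bd)=F(b)g(d)+g(b)G(d)$ with $F,G$ multiplicative agree once they agree on the generators, then verify each identity this way, using the four-term expansion of $(\chi_i*\chi_j)(bd)$ together with the cancellations from (3a)--(3b) for 4(a), and the substitution $\chi_{1}=\frac{q^{2}}{1-q^{2}}(f_{1}-\epsilon)$ for 4(b)--(c). The only cosmetic difference is that the paper derives (3c)--(3f) by short convolution-algebra manipulations from (1), (3a), (3b) and 4(a) rather than re-running the generator check, but both routes are sound.
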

\begin{rem}
Compare (2) with~\eqref{su1} from the introduction.
\end{rem}
\begin{proof}
For easy reference we list the action of coproduct and antipode on the generators again
$$
\begin{array}{cc}
\Delta(\alpha)=\alpha\otimes \alpha-q \gamma^{*}\otimes \gamma , &  \Delta (\gamma)= \gamma\otimes \alpha+ \alpha^{*}\otimes \gamma\\

\end{array}
$$
$$
\begin{array}{ccc}
S(\alpha)=\alpha^{*}, &  S(\alpha^{*})=\alpha, &  S(\gamma)=-q \gamma,\\
&  S(\gamma^{*})=-q^{-1}\gamma^{*}.
\end{array}
$$
$(1)$ is easy since $f_{0}*f_{0}$ is still an algebra homomorphism to $\mathbb{C}$ and a quick calculation gives that its values on the generators coincide with $f_{1}$'s.
\\

If we there are $\chi,\chi',f,f'\in(SU_{q}^{2}(2))^{*}$ such that $\chi(bd)=f(b)\chi(d)+\chi(b)f'(d)$ and $\chi'(bd)=f(b)\chi'(d)+\chi'(b)f'(d)$ for all $b,d\in SU_{q}^{0}(2),$ then to show that $\chi=\chi',$ we only need to check this equality on the generators of $SU_{q}^{0}(2).$  We have for all $b,d\in SU_{q}^{0}(2)$ that
$$
\chi_{k}*(bd)=(I\otimes \chi_{k})\Delta(bd)=(I\otimes\chi_{k})\sum b_{(1)}d_{(1)}\otimes b_{(2)}d_{(2)}=
\sum b_{(1)}d_{(1)}\otimes \chi_{k}(b_{(2)}d_{(2)})=
$$
$$
\sum b_{(1)}d_{(1)}\otimes\epsilon( b_{(2)})\chi_{k}(d_{(2)})+\sum b_{(1)}d_{(1)}\otimes \chi_{k}( b_{(2)})f_{k}(d_{(2)})=
$$
$$
\left(\sum b_{(1)}\otimes \epsilon(b_{(2)})\right)\left(\sum d_{(1)}\otimes \chi_{k}(d_{(2)})\right)+\left(\sum b_{(1)}\otimes \chi_{}k(b_{(2)})\right)\left(\sum d_{(1)}\otimes f_{k}(d_{(2)})\right)=
$$

$$
b(\chi_{k}*d)+(\chi_{k}*b)(f_{k}*d)
$$
and similarly $(bd)*\chi_{k}=b (d*\chi_{k})+(b*\chi_{k})(d*f_{k}) .$
\\

(2) Lets denote the left side of $(a)$ by $X.$ It is not hard to see that $X(bd)=\epsilon(b)X(d)+X(b)f_{0}(d)$ for all $b,d\in SU_{q}^{0}(2)$ and hence by the above comment, we only need to check the equality on the set $\alpha,\alpha^{*},\gamma,\gamma^{*}$ and a calculation shows that the only non-zero value is $X(\gamma)=-q.$ Thus $X=\chi_{2}.$ (b) follows much in the same vein and is left to the reader.

(3) We will prove (a), then $(b)$ follows by taking the involution on $(a),$ using $2(a)$ and that $f^{*}_{0}=f_{0}$ (left for the reader to prove). We then get $(c)$ and $(d)$ by using $1,$ since, for example, $f_{1}*\chi_{0}=f_{0}*f_{0}*\chi_{0}=q^{-2}f_{0}*\chi*f_{0}=q^{-4}\chi_{0}*f_{0}*f_{0}=q^{-4}\chi_{0}*f_{1}.$ The two that remains $(e),(f)$ follows from the first four once we proven $4(a).$
\\

 As $f_{0}$ is a homomorphism and $(f_{0}*\chi_{0})(b)=f_{0}(\chi_{0}*b)$ and $(\chi_{0}*f_{0})(b)=f_{0}(b*\chi_{0})$, we get
$$
(f_{0}*\chi_{0})(bd)=f_{0}(b)(f_{0}*\chi_{0})(d)+(f_{0}*\chi_{0})(b)f_{1}(d),\forall b,d\in SU_{q}^{0}(2)
$$
$$
(\chi_{0}*f_{0})(bd)=f_{0}(b)(\chi_{0}*f_{0})(d)+(\chi_{0}*f_{0})(b)f_{1}(d),\forall b,d\in SU_{q}^{0}(2).
$$

We now calculate:
$$
\begin{tabular}{|r  l|}
  \hline
&   \\
$(f_{0}*\chi_{0})(\alpha)=0$ & $(f_{0}*\chi_{0})(\alpha^{*})=0 $ \\
$(f_{0}*\chi_{0})(\gamma)=0$ & $(f_{0}*\chi_{0})(\gamma^{*})=-q^{-2}$\\
& \\
\hline
&\\
$(\chi_{0}*f_{0})(\alpha)=0$ & $(\chi_{0}*f_{0})(\alpha^{*})=0 $ \\
$(\chi_{0}*f_{0})(\gamma)=0$ & $(\chi_{0}*f_{0})(\gamma^{*})=-1$\\
&   \\
  \hline
\end{tabular}
$$
proving (3).
\\

To prove $(4),$ we	first calculate
$$(\chi_{k}*\chi_{j})(bd)=\chi_{k}(\chi_{j}*(bd))=\chi_{k}(b(\chi_{j}*d)+(\chi_{j}*b)(f_{j}*d))=$$
$$
\chi_{k}(b(\chi_{j}*d))+\chi_{k}((\chi_{j}*b)(f_{j}*d))=
$$
\begin{equation}\label{con}
\epsilon(b)(\chi_{k}*\chi_{j})(d)+\chi_{k}(b)(f_{k}*\chi_{j})(d)+\chi_{j}(b)(\chi_{k}*f_{j})(d)+(\chi_{k}*\chi_{j})(b)(f_{k}*f_{j})(d)
\end{equation}

(here we used that the counit $\epsilon$ is the identity in the convolution product). If we call the left side of $4(a)$ by $X',$ we see that if we use~\eqref{con} and $f_{0}*f_{2}=f_{1}$ (as $f_{0}=f_{2}$) on $X'(bd)$ that
$$
X'(bd)=\epsilon(b)X'(d)+X'(b)f_{1}(d)+
$$
$$
\chi_{2}(b)(q (f_{2}*\chi_{0})(d)-q^{-1}(\chi_{0}*f_{2})(d))+\chi_{0}(b)(q(\chi_{2}*f_{0})(d)-q^{-1} (f_{0}*\chi_{2})(d))
$$ 
and from $3(a),3(b)$ we see that the last two terms disappears. A calculation gives that $X'(\alpha)=1,X'(\alpha^{*})=-q^{2}$ and $=0$ on $\gamma,\gamma^{*},$ proving $4(a).$ To prove $4(b),$ we substitute the formula $\chi_{1}=\frac{q^{2}}{1-q^{2}}(f_{1}-\epsilon)$ into the left side to get 
$$
q^{2}\chi_{1}*\chi_{0}-q^{-2}\chi_{0}*\chi_{1}=\frac{q^{2}}{1-q^{2}}(q^{-2}\chi_{0}*\epsilon-q^{2}\epsilon*\chi_{0}+(q^{2}f_{1}*\chi_{0}-q^{-2}\chi_{0}*f_{1}))=
$$
$$
\frac{q^{2}}{1-q^{2}}(q^{-2}-q^{2})\chi_{0}=(1+q^{2})\chi_{0}
$$
by $3(c).$ The last equality can now be proved in a similar way or alternatively by applying the $*$-involution to $4(b)$ and use $2(a),2(b).$
\end{proof}
The idea is now is to use the linear functionals $f_{0},f_{1}, f_{2}, \chi_{0} ,\chi_{1}, \chi_{2}$ to create a $SU_{q}^{0}(2)$-bimodule $\Gamma$ and a linear map $d:SU_{q}^{0}(2)\rightarrow \Gamma$ which is a derivation i.e $d(bc)=bd(c)+d(b)c,\forall b,c\in SU_{q}^{0}(2).$ Once we have proven some further properties of these constructions, we will see that these are tools we can use to completely classify the finite dimensional representations!
\\

Lets get to it.
\begin{defn}
Let $\Gamma$ be the free left $SU_{q}^{0}(2)$-module generated by three elements $\omega_{0},\omega_{1},\omega_{2}.$
So every element in $\Gamma$ is of the form $\sum_{k=0}^{2}b_{k}\omega_{k}$ with $b_{k}\in SU_{q}^{0}(2)$ and the left multiplication by $SU_{q}^{0}(2)$ is given by
$$b\cdot(\sum_{k=0}^{2}b_{k}\omega_{k})=\sum_{k=0}^{2}b b_{k}\omega_{k}.$$
We now define a right $SU_{q}^{0}(2)$-multiplication on $\Gamma$ by
$$(\sum_{k=0}^{2}b_{k}\omega_{k})\cdot b=\sum_{k=0}^{2}b_{k}(f_{k}*b)\omega_{k}.$$
Since each $f_{k}$ is a homomorphism from $SU_{q}^{0}(2)$ to $\mathbb{C},$ we have $(f_{k}*b)(f_{k}*c)=f_{k}*(bc)$ and so we get a well defined right $SU_{q}^{0}(2)$-module structure. Furthermore, it is easy to see that $b\cdot (x\cdot c)=(b\cdot x)\cdot c$ for $b,c\in SU_{q}^{0}(2)$ and $x\in\Gamma.$
\end{defn}

\begin{defn}
Let $d:SU_{q}^{0}(2)\rightarrow \Gamma$ be the linear map $b\mapsto \sum_{k=0}^{2}(\chi_{k}*b)\omega_{k}.$
\end{defn}
\begin{lem}
$d:SU_{q}^{0}(2)\rightarrow \Gamma$ is a derivation.
\end{lem}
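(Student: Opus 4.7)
The plan is to reduce the claim to a computation already carried out in the proof of Proposition~\ref{conprop}. Recall that during the verification of part (4) of that proposition, the following twisted Leibniz rule for the left convolution was established for each $k \in \{0,1,2\}$ and all $b,c \in SU_q^0(2)$:
\begin{equation}\label{twistlei}
\chi_k * (bc) \;=\; b\,(\chi_k * c) \;+\; (\chi_k * b)\,(f_k * c).
\end{equation}
This follows from the fact that $\chi_k(xy) = \epsilon(x)\chi_k(y) + \chi_k(x) f_k(y)$, the homomorphism property of $f_k$ and $\epsilon$, and Sweedler manipulations; I would just cite it.

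Given \eqref{twistlei}, the rest is unpacking the definitions. First I would apply $d$ to a product and use \eqref{twistlei} termwise:
\begin{equation*}
d(bc) \;=\; \sum_{k=0}^{2} \bigl(\chi_k * (bc)\bigr)\,\omega_k \;=\; \sum_{k=0}^{2} b\,(\chi_k * c)\,\omega_k \;+\; \sum_{k=0}^{2} (\chi_k * b)\,(f_k * c)\,\omega_k.
\end{equation*}
Next I would identify each summand with the appropriate module action. By the definition of the left $SU_q^0(2)$-action on $\Gamma$, the first sum equals $b \cdot \sum_{k} (\chi_k * c)\,\omega_k = b \cdot d(c)$. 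By the definition of the right action, $\omega_k \cdot c$ is obtained by multiplying the coefficient of $\omega_k$ by $f_k * c$, so the second sum equals $\bigl(\sum_{k} (\chi_k * b)\,\omega_k\bigr)\cdot c = d(b)\cdot c$. Combining these gives $d(bc) = b\cdot d(c) + d(b)\cdot c$, as required.

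There is no real obstacle: the only non-trivial ingredient is the twisted Leibniz rule \eqref{twistlei}, which is already recorded in the proof of Proposition~\ref{conprop}, and the matching of the two sums with the two module actions is immediate from how the right action on $\Gamma$ was defined (precisely so as to absorb the $f_k$ twist produced by $\chi_k$). So my writeup would be three or four lines: quote \eqref{twistlei}, apply it, and read off the left/right actions.
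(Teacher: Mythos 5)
Your proposal is correct and follows the same route as the paper: the paper's proof also just recalls the twisted Leibniz rule $\chi_k*(bc)=b(\chi_k*c)+(\chi_k*b)(f_k*c)$ established in the proof of Proposition~\ref{conprop} and observes that the left and right module actions on $\Gamma$ were defined precisely to turn this into $d(bc)=b\cdot d(c)+d(b)\cdot c$. Your writeup merely spells out the termwise matching a bit more explicitly than the paper does.
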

\begin{proof}
Recall from Proposition~\ref{conprop} that convolution by $\chi_{k}$ satisfies 
\begin{equation}\label{deri}
\chi_{k}*(bc)=b(\chi_{k}*c)+(\chi_{k}*b)(f_{k}*c).
\end{equation}
It now follows from the definition of the right and left $SU_{q}^{0}(2)$-multiplication that we have $d(bc)=bd(c)+d(b)c.$
\end{proof}
\begin{prop}\label{fu}
The only elements in kernel of $d$ is of the form $\lambda I$ with $\lambda\in\mathbb{C}.$
\end{prop}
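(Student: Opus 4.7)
My plan is to use the three conditions $\chi_0 * b = \chi_1 * b = \chi_2 * b = 0$ in sequence, cutting down the space of candidate $b$ at each step.

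First, the identity $\chi_1 = \frac{q^2}{1-q^2}(f_1 - \epsilon)$ from Proposition~\ref{conprop}(3) gives $\chi_1 * b = 0 \Leftrightarrow f_1 * b = b$. Since $f_1$ is an algebra homomorphism, each basis vector $A_{k,n,m}$ is an eigenvector of the convolution operator $f_1 *$ with eigenvalue $q^{2(n-m-k)}$; for $q \in (0,1)$ these eigenvalues separate the different values of $n-m-k$. Hence $\chi_1 * b = 0$ confines $b$ to the ``weight zero'' subspace $V_0 := \spann\{B_{n,m} : n, m \in \mathbb{N}\}$ with $B_{n,m} := A_{n-m,\,n,\,m}$, and I may write $b = \sum_{n,m} c_{n,m} B_{n,m}$ as a finite sum.

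Second, I exploit the twisted Leibniz rule $\chi_0 * (xy) = x(\chi_0 * y) + (\chi_0 * x)(f_0 * y)$ that follows from~\eqref{xi}, together with the values of $\chi_0$ on the generators (readily obtained from $\Delta(\alpha), \Delta(\gamma)$ and the table of $\chi_0$-values). A short induction shows that for $n \geq m \geq 0$ with $n \geq 1$,
\begin{equation*}
\chi_0 * B_{n,m} = -\lambda_{n,m}\, A_{n-m+1,\, n-1,\, m}
\end{equation*}
for some nonzero scalar $\lambda_{n,m}$: a \emph{single} basis element whose first index $n-m+1$ is strictly positive. In the complementary cases ($n = 0 < m$, or $m > n \geq 1$), a parallel computation---where the commutation $\alpha^*\alpha = I - \gamma^*\gamma$ creates extra summands---shows that $\chi_0 * B_{n,m}$ is supported only on basis elements with first index $\leq 0$. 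Equating the coefficient of each $A_{k,n',m'}$ with $k > 0$ in $\chi_0 * b = 0$ therefore isolates a single contribution and forces $c_{n,m} = 0$ whenever $n \geq m \geq 0$ and $n \geq 1$. An entirely symmetric analysis of $\chi_2 * b = 0$ (using $\chi_2 * \alpha^* = \chi_2 * \gamma^* = 0$, $\chi_2 * \alpha = q^2\gamma^*$, $\chi_2 * \gamma = -q\alpha^*$) yields a clean formula for $\chi_2 * B_{n,m}$ when $m \geq n \geq 0$ and $m \geq 1$, supported on a basis element of strictly negative first index; this kills $c_{n,m}$ for all such $(n,m)$.

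Taken together, the two constraints cover every $(n,m) \neq (0,0)$, forcing $b = c_{0,0}\cdot I$. The main technical hurdle is organising the ``cross'' cases where $\chi_0 *$ or $\chi_2 *$ acting on some $B_{n,m}$ produces multiple monomials via the commutation relations; the essential observation that keeps the argument clean is that these cross outputs always live in basis elements with the ``wrong sign'' of first index compared to the main cases, so the spectral coefficient-comparison does not mix them.
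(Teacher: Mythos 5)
Your argument is correct and rests on the same machinery as the paper's proof --- the basis $\{A_{k,n,m}\}$ of Lemma~\ref{bar}, the identity $\chi_{1}=\frac{q^{2}}{1-q^{2}}(f_{1}-\epsilon)$ to force $n=k+m$, and the iterated twisted Leibniz rule for $\chi_{0}$ and $\chi_{2}$ --- but you organise the endgame differently. The paper first observes that the kernel is stable under right convolution by arbitrary functionals and under left convolution by $f_{0}$, hence under the algebra endomorphism $x\mapsto f_{0}*x*f_{0}$, which scales $A_{k,n,m}$ by $q^{-2k}$; combined with a minimality assumption on the linear dependence, this pins the exponent $k$ to a single value, after which $\chi_{0}*$ (for $k\geq 0$) or $\chi_{2}*$ (for $k<0$) sends each surviving monomial to a nonzero multiple of a single basis vector, forcing $n=0$ and hence $k=m=0$. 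You instead keep all values of $k$ in play and separate them by the sign of the leading index: for $k=n-m\geq 0$ the operator $\chi_{0}*$ produces one monomial of strictly positive first index, while for $k<0$ its output (after pushing the created $\alpha$ through the $\alpha^{*}$'s via $\alpha^{*}\alpha=I-\gamma^{*}\gamma$) stays at first index $\leq 0$, so coefficient comparison at positive indices is uncontaminated; the mirror argument with $\chi_{2}$ covers $m\geq n$, $m\geq 1$, and the two cases exhaust all $(n,m)\neq(0,0)$. Both routes work: yours dispenses with the invariance-plus-minimality reduction at the price of the cross-term bookkeeping you flag, and that bookkeeping does check out --- the scalars $\lambda_{n,m}$ are geometric sums of powers of $q$ with a common sign, hence nonzero for $q\in(0,1)$, and the index maps $(n,m)\mapsto(n-m+1,n-1,m)$ and $(n,m)\mapsto(n-m-1,n,m-1)$ are injective --- whereas the paper's reduction to a fixed $k$ renders that bookkeeping unnecessary.
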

\begin{proof}
We need to prove that for any $b\in SU_{q}^{0}(2),$ we have that if $\chi_{k}*b=0$ for $k=0,1,2,$ then $b=\lambda I$ for some $\lambda\in\mathbb{C}.$ Let $B=\{x\in SU_{q}^{0}(2):\chi_{k}*x=0,k=0,1,2\}.$  Notice that for any $h\in (SU_{q}^{0}(2))^{*}$ we have $\chi_{k}*(x*h)=(\chi_{k}*x)*h,$ so if $\chi_{k}*x=0$ then the same is true for $x*h.$ Hence $B$ is closed under right convolution with any element in $(SU_{q}^{0}(2))^{*}.$
\\

Recall from Lemma~\ref{bar} that we have a vector space basis for $SU_{q}^{0}(2)$ given by the elements
$$
A_{k,n,m}=
\left\{\begin{array}{ccc}
 \alpha^{k}\gamma^{*n}\gamma^{m}, & \text{If $k\geq 0$}\\
\alpha^{*(-k)}\gamma^{*n}\gamma^{m}, & \text{If $k<0$}
\end{array}\right\}.
$$
Assume we have $b=\sum_{k,n,m} \beta_{k,n,m}A_{k,n,m}$ such that $\chi_{k}*b=0$ for $k=0,1,2.$ Let further assume that $b$ is minimal in the sense that no set of basis vectors can be removed without making the convolution non-zero (except removing all of them of course!). As $\chi_{1}=\frac{q}{1-q^{2}}(f_{1}-\epsilon),$ we get 
$$
0=\chi_{1}*b=\sum_{k,n,m} \frac{q^{2}}{1-q^{2}}(q^{2(n-k-m)}-1) \beta_{k,n,m}A_{k,n,m}
$$
and so the only possible non-zero indices $(k,n,m)$ must satisfy $n=k+m.$ From $f_{0}*\chi_{0}=q^{2}\chi_{0}*f_{0}$ we find $\chi_{0}*(f_{0}*x)=(\chi_{0}*f_{0})*x=q^{2}(f_{0}*\chi_{0})*x=q^{2}f_{0}*(\chi_{0}*x)=0$ for any $x\in B,$ and similar is true for both $\chi_{1}$ and $\chi_{2}.$ Hence $B$ is closed under left convolution by $f_{0}.$ We now consider the operator $f(x)$ given by $x\mapsto f_{0}*x*f_{0}.$ It is easy to see that $f$ is an algebra endomorphism of $SU_{2}^{0}(2)$ and that 
\begin{equation}
\begin{array}{cccc}
f(\alpha)=q^{-2}\alpha & f(\gamma)=\gamma & f(\alpha^{*})=q^{2}\alpha^{*} & f(\gamma^{*})=\gamma^{*}.\\
\end{array}
\end{equation}
By the earlier calculations, $B$ is invariant under $f$ and from this we see that we can also assume that the nonzero coefficients $\beta_{k,n,m}$ has $k$ fixed. Let us now call them $\beta_{n.m}$ instead.
\\

Lets assume that $k\geq 0$ and consider $\chi_{0}*b$ (if $k<0$ consider $\chi_{2}*b$ instead in the calculation below). If we iterate~\eqref{deri}, we see that for a product $\prod_{j=0}^{k}c_{j}$ of elements in $SU_{q}^{0}(2)$ (assume that the product multiplies on the right side i.e $\prod_{j=0}^{k}c_{j}=c_{0}c_{1}...c_{k}$) we have
\begin{equation}\label{deriprod}
\chi_{k}*\left(\prod_{j=0}^{k}c_{j}\right)=\sum_{l=0}^{k}\left(\prod_{j=0}^{l-1}c_{j}\right)(\chi_{k}*c_{l})f_{k}*\left(\prod_{j=l+1}^{k}c_{j}\right).
\end{equation}
By using this and the table~\eqref{chicon}, we see that 
$$\chi_{0}*b=\sum_{n,m}\beta_{n,m}C_{n.m}A_{k+1,n-1,m}$$
where $C_{n,m}$ is $\neq 0$ if $n>0$ and $=0$ if $n=0.$ Thus the $A_{k,n,m}$'s in $b$ must have $n=0.$ From the earlier we know that $n=k+m.$ Hence $0=k+m$ and as these where positive integers, we must have $n=m=k=0.$
\end{proof}
\begin{defn}
Given a finite dimensional representation $U\in M_{n}(\mathbb{C})\otimes SU_{q}^{0}(2),$ we define three matrices $A^{U}_{0},A^{U}_{1},A^{U}_{2}\in M_{n}(\mathbb{C})$ by
\begin{equation}
\begin{array}{ccc}
A^{U}_{0}:=(\iota\otimes \chi_{0})U,& A^{U}_{1}:=(\iota\otimes \chi_{1})U,  & A^{U}_{2}:=(\iota\otimes \chi_{2})U. \\
\end{array}
\end{equation}
\end{defn}
\begin{lem}\label{system}
For any finite dimensional representation $U=(u_{j,k})_{j,k}\in M_{n}(\mathbb{C})\otimes SU_{q}^{0}(2) ,$ we have
\begin{enumerate}
\item
\begin{enumerate}
\item $q A^{U}_{2}A^{U}_{0}-q^{-1}A^{U}_{0}A^{U}_{2}=A^{U}_{1}$
\item $q^{2}A^{U}_{1}A^{U}_{0}-q^{-2}A^{U}_{0}A^{U}_{1}=(1+q^{2})A^{U}_{0}$
\item $q^{2}A^{U}_{2}A^{U}_{1}-q^{-2}A^{U}_{1}A^{U}_{2}=(1+q^{2})A^{U}_{2}$
\end{enumerate}
\item
If furthermore $U$ is unitary, then
\begin{enumerate}
\item
$-q (A^{U}_{0})^{*}=A^{U}_{2}$
\item
$(A^{U}_{1})^{*}=A^{U}_{1}$
\end{enumerate}
\end{enumerate}
\end{lem}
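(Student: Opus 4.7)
The plan is to reduce the entire lemma to the convolution identities already established in Proposition \ref{conprop}, via the key observation that applying a functional to a corepresentation converts convolution of functionals into ordinary matrix multiplication. More precisely, for any $g, h \in (SU_q^0(2))^{*}$, the identity $\Delta(u_{ij}) = \sum_m u_{im} \otimes u_{mj}$ from \eqref{eq2} gives
\begin{equation*}
\bigl[(\iota \otimes (g*h))(U)\bigr]_{ij} = \sum_m g(u_{im})\, h(u_{mj}) = \bigl[(\iota \otimes g)(U) \cdot (\iota \otimes h)(U)\bigr]_{ij},
\end{equation*}
so $(\iota\otimes(g*h))(U) = \bigl((\iota\otimes g)(U)\bigr)\bigl((\iota\otimes h)(U)\bigr)$, and this extends to linear combinations.

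With this in hand, part (1) is essentially automatic: applying $(\iota\otimes\cdot)(U)$ to Proposition \ref{conprop}(4) produces the three desired identities. For instance, $q\chi_2*\chi_0 - q^{-1}\chi_0*\chi_2 = \chi_1$ translates directly into $qA^U_2 A^U_0 - q^{-1} A^U_0 A^U_2 = A^U_1$, and similarly for (b) and (c).

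For part (2), the key step is to show that when $U$ is unitary one has $(A^U_k)^* = (\iota \otimes \chi_k^*)(U)$, where $\chi_k^*$ is the functional involution $g^*(b) = \overline{g(S(b)^*)}$ discussed just before the proposition. This rests on the standard Hopf-algebraic fact that for a unitary corepresentation $S(u_{ij}) = u_{ji}^*$, which follows from $m(S\otimes\iota)\Delta(u_{ij}) = \epsilon(u_{ij})I$ (so the matrix $(S(u_{ij}))$ is the left inverse of $(u_{ij})$) combined with $U^{-1} = U^*$. Granted this, one computes
\begin{equation*}
\bigl[(\iota\otimes \chi_k^*)(U)\bigr]_{ij} = \overline{\chi_k(S(u_{ij})^*)} = \overline{\chi_k(u_{ji})} = \bigl[(A^U_k)^*\bigr]_{ij}.
\end{equation*}
Applying this identification to Proposition \ref{conprop}(2) now gives $-q(A^U_0)^* = A^U_2$ and $(A^U_1)^* = A^U_1$ immediately.

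The only real care needed is in part (2), where one must track carefully how the $*$ on functionals interacts with matrix adjoints; once the identification $S(u_{ij}) = u_{ji}^*$ for unitary $U$ is pinned down, everything is a direct translation from Proposition \ref{conprop}, and no new structural input is required.
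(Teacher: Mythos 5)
Your proposal is correct and follows essentially the same route as the paper: part (1) via the fact that $(\iota\otimes(g*h))(U)=((\iota\otimes g)(U))((\iota\otimes h)(U))$ applied to Proposition \ref{conprop}(4), and part (2) via $(\iota\otimes S)U=U^{-1}=U^{*}$ (equivalently $S(u_{ij})=u_{ji}^{*}$) to identify $((\iota\otimes g)U)^{*}$ with $(\iota\otimes g^{*})U$ and then invoke Proposition \ref{conprop}(2). No gaps.
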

\begin{proof}
$(1)$ For any $g,h\in (SU_{q}^{0}(2))^{*},$ we have 
\begin{equation}\label{mult}
(\iota\otimes (g*h))U=(\iota\otimes g\otimes h)(\iota\otimes \Delta)U=(\iota\otimes g\otimes h)U_{1,2}U_{1,3}=$$
$$((\iota\otimes g)U)((\iota\otimes h)U).
\end{equation}
Now $(a),(b),(c)$ follows from~\eqref{mult} and Proposition~\ref{conprop}.
\\

$(2)$ If $U$ is unitary, then $(\iota\otimes S)U=U^{-1}=U^{*}$ and so for any $g\in(SU_{q}^{0}(2))^{*},$ we have
$$
((\iota\otimes g)U)^{*}=(\overline{g(u_{k,j})})_{j,k}=(\overline{g((S(u_{j,k}))^{*})})_{j,k}=(\iota\otimes g^{*})U.
$$
(2) from Proposition~\ref{conprop} now gives $(a),(b).$ 
\end{proof}
\begin{prop}\label{amatrix}
Let $U\in M_{n}(\mathbb{C})\otimes SU_{q}^{0}(2)$ and $V\in M_{m}(\mathbb{C})\otimes SU_{q}^{0}(2)$ be two finite dimensional representations and $T\in M_{n,m}(\mathbb{C})$.
The following is equivalent
\begin{enumerate}
\item
$A_{k}^{U}T=T A_{k}^{V}$ for $k=0,1,2.$
\item
$U(T\otimes I)=(T\otimes I)V$

\end{enumerate}
\end{prop}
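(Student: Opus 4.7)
The direction $(2)\Rightarrow(1)$ is immediate: I would apply the slice map $\iota\otimes\chi_k$ entrywise to $U(T\otimes I)=(T\otimes I)V$; the $(i,j)$-entry of the left side is $\sum_l u_{i,l}T_{l,j}$, whose image under $\chi_k$ is $\sum_l T_{l,j}\chi_k(u_{i,l})=(A_k^UT)_{i,j}$, and the right side becomes $(TA_k^V)_{i,j}$ by the same computation.

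For $(1)\Rightarrow(2)$, the plan is to set $X:=U(T\otimes I)-(T\otimes I)V$ and show every entry $X_{i,j}\in SU_q^0(2)$ vanishes. First I would derive a cocycle identity by applying $\iota\otimes\Delta$ to both sides and using $(\iota\otimes\Delta)U=U_{12}U_{13}$, $(\iota\otimes\Delta)V=V_{12}V_{13}$, together with the definition of $X$. After a short rearrangement this gives
\begin{equation*}
(\iota\otimes\Delta)X=X_{12}V_{13}+U_{12}X_{13},\qquad\text{i.e.}\qquad \Delta(X_{i,j})=\sum_l X_{i,l}\otimes v_{l,j}+\sum_l u_{i,l}\otimes X_{l,j}.
\end{equation*}
Applying $\iota\otimes\epsilon$ together with $(\iota\otimes\epsilon)U=I=(\iota\otimes\epsilon)V$ immediately yields $\epsilon(X_{i,j})=0$.

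Next I would use the cocycle identity to prove, by induction on convolution length, that $g(X_{i,j})=0$ for every $g$ in the unital convolution subalgebra $\mathcal{A}\subseteq(SU_q^0(2))^*$ generated by $\{\chi_0,\chi_1,\chi_2\}$. The base cases $g\in\{\epsilon,\chi_0,\chi_1,\chi_2\}$ follow from $\epsilon(X_{i,j})=0$ and from hypothesis (1) read entrywise, $\chi_r(X_{i,j})=(A_r^UT-TA_r^V)_{i,j}=0$. For a convolution product $g=g_1*g_2$, the cocycle identity gives
\begin{equation*}
g(X_{i,j})=(g_1\otimes g_2)\Delta(X_{i,j})=\sum_l g_1(X_{i,l})g_2(v_{l,j})+\sum_l g_1(u_{i,l})g_2(X_{l,j}),
\end{equation*}
which vanishes by the induction hypothesis applied to both $g_1$ and $g_2$.

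The last step, which I expect to be the main obstacle, is to pass from $g(X_{i,j})=0$ for all $g\in\mathcal{A}$ to $X_{i,j}=0$. By \eqref{lala} the algebra $\mathcal{A}$ contains the character $f_1=\epsilon+\tfrac{1-q^2}{q^2}\chi_1$ and all its convolution powers $f_1^{*s}$, which act diagonally on the basis $\{A_{k,n,m}\}$ of Lemma~\ref{bar} with eigenvalues $q^{-2sk}\delta_{n,0}\delta_{m,0}$; a Vandermonde argument then forces the $A_{k,0,0}$-coefficients of each $X_{i,j}$ to vanish. To rule out components with $n>0$ or $m>0$ I would follow the index-tracking used in the proof of Proposition~\ref{fu}: iterated $\chi_0$- and $\chi_2$-convolutions act as ladder operators on the $(k,n,m)$-indices and reduce everything to the $(n,m)=(0,0)$ case already handled. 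The hard part is precisely this adaptation: Proposition~\ref{fu} concerns elements in the common kernel of the three convolutions $\chi_r*\cdot$, whereas here one instead has elements on which the entire algebra $\mathcal{A}$ of functionals vanishes, and the same basis bookkeeping has to be pushed through in this slightly different setting.
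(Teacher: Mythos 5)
Your direction $(2)\Rightarrow(1)$ and the first two stages of $(1)\Rightarrow(2)$ are correct: the cocycle identity $(\iota\otimes\Delta)X=X_{12}V_{13}+U_{12}X_{13}$ for $X:=U(T\otimes I)-(T\otimes I)V$ does hold, $\epsilon$ kills every entry of $X$, and your induction correctly shows that every functional in the unital convolution algebra $\mathcal{A}$ generated by $\chi_{0},\chi_{1},\chi_{2}$ annihilates every $X_{i,j}$. The problem is the step you yourself flag as the obstacle: it is a genuine gap, not a routine adaptation of Proposition~\ref{fu}. What you need is that $\mathcal{A}$ \emph{separates points} of $SU_{q}^{0}(2)$, i.e.\ nondegeneracy of the pairing between $\mathcal{A}$ and $SU_{q}^{0}(2)$. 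Proposition~\ref{fu} is a different statement: it concerns the joint kernel of the three convolution operators $b\mapsto\chi_{k}*b$, whose outputs are elements of the algebra, not numbers, and it cannot be applied to $X$ because one computes from your cocycle identity and hypothesis $(1)$ that $\chi_{k}*X_{i,j}=\sum_{l}X_{i,l}(A^{V}_{k})_{l,j}$, which need not vanish. Your sketch for ruling out components with $n>0$ or $m>0$ (iterated $\chi_{0}$-, $\chi_{2}$-evaluations as ladder operators) is plausible but is exactly the hard content, and note also that $\mathcal{A}$ contains $f_{1}$ but not obviously $f_{0}$ or $f_{1}^{*(-1)}$, so the bookkeeping from Proposition~\ref{fu} does not transfer verbatim. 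As written, the proof is incomplete.

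The paper avoids this entirely by working with the multiplicative combination $Y:=U(T\otimes I)V^{-1}$ rather than the difference $X$. Since $\lbrack$the extension of$\rbrack$ $d$ is a derivation and $(I\otimes d)W=WA^{W}$, one gets $(I\otimes d)(W^{-1})=-A^{W}W^{-1}$ and hence $(I\otimes d)Y=U\bigl(A^{U}(T\otimes I)-(T\otimes I)A^{V}\bigr)V^{-1}$; condition $(1)$ then says every entry of $Y$ lies in $\ker d$, Proposition~\ref{fu} forces $Y$ to be a scalar matrix, and applying $I\otimes\epsilon$ identifies that scalar matrix as $T\otimes I$. The $V^{-1}$ factor is precisely what converts your cocycle into something the derivation annihilates. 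So either switch to $Y$ and invoke the already-proven Proposition~\ref{fu}, or supply a separate proof of the nondegeneracy of the pairing with $\mathcal{A}$ --- but the latter is an additional nontrivial lemma, not a corollary of what the paper has established.
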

\begin{proof}
The tensor product $I\otimes d:M_{n}(\mathbb{C})\otimes SU_{q}^{0}(2)\rightarrow M_{n}(\mathbb{C})\otimes \Gamma$ is still a derivation, as is easy to check. For a finite dimensional representation $W\in M_{j}(\mathbb{C})\otimes SU_{q}^{0}(2),$ let 

\begin{equation}
A^{W}:=\sum_{k=0}^{2}A^{W}_{k} \omega_{k}\in M_{j}(\mathbb{C})\otimes \Gamma.
\end{equation}
$(1)$ is of course now equivalent to $(T\otimes \iota)A^{W}=A^{W}(T\otimes \iota).$
We then get 
$$(I\otimes d)W=\sum_{k=0}^{2}(I\otimes \chi_{k}*)(W) \omega_{k}=\sum_{k=0}^{2}(I\otimes I \otimes\chi_{k})(I\otimes \Delta)(W) \omega_{k}= $$
$$
\sum_{k=0}^{2}(I\otimes I \otimes\chi_{k})(U_{1,2}U_{1,3}) \omega_{k}=W(\sum_{k=0}^{2}A^{W}_{k} \omega_{k})=W A^{W}.
$$
This calculations now gives
 $$0=(I\otimes d)(I)=(I\otimes d)(W^{*}W)=(I\otimes d)(W^{-1})W+W^{-1}(I\otimes d)(W)=$$
$$(I\otimes d)(W^{-1})W+A^{W}$$
and hence $(I\otimes d)(W^{-1})=-A^{W}W^{-1}.$
Notice that if $U(T\otimes I)V^{-1}=(R\otimes I)$ for some $T,R\in M_{n,m}(\mathbb{C}),$ then 
$$(I\otimes \epsilon)(U(T\otimes I)V^{-1})=(I\otimes \epsilon)(U)(T\otimes I)(I\otimes \epsilon)(V^{-1})=T\otimes I$$ since $(I\otimes \epsilon)W=I$ for any FDR $W.$ So $T=R$ and hence $U(T\otimes I)=(T\otimes I)V$ is equivalent to $U(T\otimes I)V^{-1}\in M_{n,m}(\mathbb{C}).$ We then get
$$
(I\otimes d)(U(T\otimes I)V^{-1})=U(A^{U}(T\otimes I)-(T\otimes I)A^{V})V^{-1}.
$$
By Proposition~\ref{fu}, we have $(I\otimes d)(U(T\otimes I)V^{-1})=0$ iff $U(T\otimes I)V^{-1}\in M_{n,m}(\mathbb{C}).$
This establishes the equivalence between $(1)$ and $(2).$
\end{proof}
If $U\in M_{n}(\mathbb{C})\otimes SU_{q}^{0}(2)$ is a unitary FDR and $U$ is not irreducible, then there exists an orthogonal projection $ P\in M_{n}(\mathbb{C})$ such that $(P\otimes I) U=U (P\otimes I).$ By Proposition~\ref{amatrix} we also have $P A_{k}^{U}=A_{k}^{U} P$ for $k=0,1,2$ and conversely any orthogonal projection $P$ commuting with the $A_{k}^{U}$'s determines an intertwiner for $U.$ We now analyze the situation a bit more carefully. Lets define a general system $\textbf{A}=\{A_{0},A_{1},A_{2}\}\subseteq M_{n}(\mathbb{C})$ such that
\begin{equation}\label{sys1}
\begin{array}{ccc}
q A_{2}A_{0}-q^{-1}A_{0}A_{2}=A_{1}\\
q^{2}A_{1}A_{0}-q^{-2}A_{0}A_{1}=(1+q^{2})A_{0}
\end{array}
\end{equation}
\begin{equation}\label{sys2}
\begin{array}{ccc}
-q A^{*}_{0}=A_{2}\\
A_{1}^{*}=A_{1}.
\end{array}
\end{equation}
Call such system $\textbf{A}$ an \textit{infinitesimal representation} of $SU_{q}^{2}$
We say that an infinitesimal representation $\textbf{A}$ is irreducible if there are no nontrivial idempotents in $M_{n}(\mathbb{C})$ that commutes with all the members of $\textbf{A}.$ From Proposition~\ref{amatrix}, we know that a system coming from a FDR $U$ is irreducible iff $U$ is irreducible. We now classify all irreducible $\textbf{A}.$
\begin{prop}
Let $\textbf{A}\subseteq M_{n+1}(\mathbb{C})$ be irreducible, then there is an orthonormal basis $\{\xi_{-n},\xi_{2-n},..,\xi_{n-2},\xi_{n}\}\subseteq \mathbb{C}^{n+1}$ such that in this basis $A_{0},A_{1},A_{2}$ acts as 
\begin{equation}
\begin{array}{cccc}
A_{0}\xi_{k}=-c_{k+1}\xi_{k+2} &A_{1}\xi_{k}=d_{k}\xi_{k} & A_{2}\xi_{k}=-q c_{k}\xi_{k-2} & \text{for all $k$,}\\
\end{array}
\end{equation}
where $c_{k}=\frac{\sqrt{(q^{-k}-q^{n})(q^{-n}-q^{2-k})}}{q^{-2}-1},$ $d_{k}=\frac{q^{-2k}-1}{q^{-2}-1}.$
In particular, the irreducible $\textbf{A}$'s are determined up to unitary equivalence by the dimension.
\end{prop}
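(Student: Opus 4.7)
My plan is to mimic the classical $\mathfrak{su}(2)$ analysis from Section~1, with $A_{1}$ playing the role of $h$ and $A_{0}, A_{2}$ the roles of $e, f$ (up to signs). The extra work comes entirely from the $q$-deformation of the commutation relations.

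First, rewriting the second equation in (\ref{sys1}) as $A_{1}A_{0}=q^{-4}A_{0}A_{1}+(1+q^{-2})A_{0}$ shows that if $A_{1}v=\mu v$ and $A_{0}v\neq 0$ then $A_{1}(A_{0}v)=(q^{-4}\mu+1+q^{-2})(A_{0}v)$; analogously, $A_{1}(A_{2}v)=(q^{4}\mu-q^{2}(1+q^{2}))(A_{2}v)$ when $A_{2}v\neq 0$. The affine map $\mu\mapsto q^{-4}\mu+1+q^{-2}$ has unique fixed point $x^{*}=-q^{2}/(1-q^{2})<0$, and for $q\in(0,1)$ every iterate of a value $>x^{*}$ tends to $+\infty$. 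Since $A_{1}=A_{1}^{*}$ has real spectrum and an orthonormal eigenbasis, let $\lambda$ be its largest eigenvalue and $w\neq 0$ a corresponding eigenvector. I claim $A_{0}w=0$: otherwise, either $\lambda>x^{*}$ and iterating $A_{0}$ produces arbitrarily large eigenvalues, impossible in finite dimension; or $\lambda\leq x^{*}$, in which case $\tr A_{1}\leq(n+1)x^{*}<0$, contradicting $\tr A_{1}=(1-q^{2})\tr(A_{0}^{*}A_{0})\geq 0$ (the latter coming from the first equation in (\ref{sys1}) and (\ref{sys2}), which combine to $A_{0}A_{0}^{*}-q^{2}A_{0}^{*}A_{0}=A_{1}$).

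Next I form the chain $v_{j}:=A_{2}^{j}w$, whose nonzero members are $A_{1}$-eigenvectors with pairwise distinct eigenvalues $\mu_{j}=q^{4j}(\lambda-x^{*})+x^{*}$, hence linearly independent. Let $n$ be the least integer with $v_{n+1}=0$. Rewriting the first equation in (\ref{sys1}) as $A_{0}A_{2}=q^{2}A_{2}A_{0}-qA_{1}$, I show inductively that $A_{0}v_{j}=e_{j}v_{j-1}$ with $e_{0}=0$ and $e_{j+1}=q^{2}e_{j}-q\mu_{j}$; hence $\spann\{v_{0},\ldots,v_{n}\}$ is invariant under $A_{0},A_{1},A_{2}$, and by irreducibility equals $\mathbb{C}^{n+1}$, giving $\dim V=n+1$. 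The value of $\lambda$ is now forced: since $v_{n+1}=0$, $0=A_{0}v_{n+1}=(q^{2}e_{n}-q\mu_{n})v_{n}$, so $e_{n}=q^{-1}\mu_{n}$. Solving the linear recursion for $e_{n}$ in closed form via geometric sums and equating with $q^{-1}\mu_{n}$ gives, after cancelling the nonzero factor $1-q^{2(n+1)}$, the single linear equation $\lambda-x^{*}=q^{2-2n}/(1-q^{2})$, i.e.\ $\lambda=(q^{-2n}-1)/(q^{-2}-1)=d_{n}$. Therefore $\mu_{j}=d_{n-2j}$, and the $A_{1}$-eigenvalues along the chain are exactly $d_{n},d_{n-2},\ldots,d_{-n}$ as claimed.

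For the orthonormalization, since $A_{1}$ is self-adjoint the $v_{j}$ are already pairwise orthogonal and only their norms need to be computed. Using (\ref{sys2}), $A_{2}^{*}A_{2}=q^{2}A_{0}A_{0}^{*}=-qA_{0}A_{2}$, which yields the recursion $\|v_{j+1}\|^{2}=q^{2}(\mu_{j}-qe_{j})\|v_{j}\|^{2}$, explicitly solvable from formulas already in hand. Setting $\xi_{k}:=v_{(n-k)/2}/\|v_{(n-k)/2}\|$ gives an orthonormal basis in which $A_{1}$ has the claimed diagonal form, and the entries of $A_{0},A_{2}$ can be read directly off $e_{j}$ and the norm ratios. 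The remaining obstacle is the algebraic bookkeeping that identifies these entries with $-c_{k+1}$ and $-qc_{k}$ for $c_{k}$ as in the statement; this is a finite geometric-series manipulation encoding simultaneously $e_{0}=0$ (initial condition) and $v_{n+1}=0$ (termination). Uniqueness up to unitary equivalence is then immediate, since every entry of $A_{0},A_{1},A_{2}$ in the basis $\{\xi_{k}\}$ has been pinned down.
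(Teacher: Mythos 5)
Your skeleton is the right one, and it is essentially the proof the paper declines to write out: the paper's own ``proof'' of this proposition is a one-line citation of Woronowicz's Theorem 5.4 together with the remark that the argument parallels the $\su$ computation from Section 1 --- which is exactly what you carry out. The individual steps check: the ladder identities $A_{1}(A_{0}v)=(q^{-4}\mu+1+q^{-2})A_{0}v$ and $A_{1}(A_{2}v)=(q^{4}\mu-q^{2}(1+q^{2}))A_{2}v$ follow from \eqref{sys1} (note that the $A_{2}$ version needs the third commutation relation, which is not displayed in \eqref{sys1} but follows from the second by taking adjoints via \eqref{sys2}); the identity $A_{1}=A_{0}A_{0}^{*}-q^{2}A_{0}^{*}A_{0}$, hence $\tr A_{1}=(1-q^{2})\tr(A_{0}^{*}A_{0})\geq 0$, is the correct $q$-analogue of ``$h$ is a commutator, so $\tr h=0$'' and cleanly rules out $\lambda\leq x^{*}$; and the termination condition $e_{n+1}=0$ does force $\lambda=d_{n}$ and $\mu_{j}=d_{n-2j}$. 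One point to make explicit: to conclude from irreducibility that $\spann\{v_{0},\dots,v_{n}\}$ is everything, you need the orthogonal projection onto an invariant subspace to commute with all three matrices, which holds because $\{A_{0},A_{1},A_{2}\}$ is closed under adjoints by \eqref{sys2}.

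The genuine gap is the step you defer as ``algebraic bookkeeping'': it is not a formality, and your construction as written does not land on the displayed formulas. First, $\|v_{j+1}\|/\|v_{j}\|>0$, so with $\xi_{k}:=v_{(n-k)/2}/\|v_{(n-k)/2}\|$ the off-diagonal coefficient of $A_{2}$ comes out positive, not negative; you must insert phases, and since $A_{2}=-qA_{0}^{*}$ links the signs of the $A_{0}$- and $A_{2}$-coefficients, they cannot be prescribed independently. Second, actually solving your recursions gives $\|A_{2}\xi_{k}\|^{2}=-qe_{j+1}=\frac{(q^{-k}-q^{n})(q^{-n}-q^{2-k})}{(q^{-2}-1)^{2}}=c_{k}^{2}$ and $\|A_{0}\xi_{k}\|=q^{-1}c_{k+2}$, i.e.\ the moduli are $c_{k}$ and $q^{-1}c_{k+2}$ rather than the stated $qc_{k}$ and $c_{k+1}$. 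The case $n=1$ already exhibits the mismatch: in the eigenbasis of $A_{1}$ one finds $A_{0}=\left(\begin{smallmatrix}0&1\\0&0\end{smallmatrix}\right)$, $A_{1}=\left(\begin{smallmatrix}1&0\\0&-q^{2}\end{smallmatrix}\right)$, $A_{2}=\left(\begin{smallmatrix}0&0\\-q&0\end{smallmatrix}\right)$, so $A_{2}\xi_{1}=-q\,\xi_{-1}$, whereas the statement's $-qc_{1}\xi_{-1}=-q^{2}\xi_{-1}$ since $c_{1}=q$. So either the constants in the statement are mistranscribed from Woronowicz or a different normalization is intended; in any case, had you carried the computation through you would have discovered that the claimed identification fails, which is exactly why this step cannot be waved away. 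The final conclusion --- uniqueness up to unitary equivalence in each dimension --- does survive your argument, since all matrix entries are pinned down up to a common choice of phases.
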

\begin{proof}
The proof is quite easy but long, boring and more or less the same as the one given for $\su$ in the introduction. For those who are interested anyway, we refer to Woronowizs paper ("Twisted $SU(2)$ Group. An Example of a
Non-Commutative Differential Calculus", Theorem $5.4$).
\end{proof}

\begin{cor}
For every $n\in\mathbb{N},$ there are at most one irreducible representation of dimension $n$.
\end{cor}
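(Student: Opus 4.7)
The plan is to package the preceding classification of irreducible infinitesimal representations via the dictionary set up in Proposition~\ref{amatrix}. First I would take two irreducible FDRs $U, V \in M_{n+1}(\mathbb{C}) \otimes SU_q^0(2)$ of the same dimension $n+1$; by the standard averaging argument for CQGs they may be replaced by equivalent unitary FDRs, so I can assume $U$ and $V$ are unitary.

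Next, I form the triples $\textbf{A}^U = \{A_0^U, A_1^U, A_2^U\}$ and $\textbf{A}^V$. By Lemma~\ref{system} each of them satisfies both~\eqref{sys1} and~\eqref{sys2}, so both are infinitesimal representations in the sense just defined. Irreducibility transfers as well: Proposition~\ref{amatrix} applied with $V = U$ and $T$ a projection shows that a projection commutes with $U$ iff it commutes with each $A_k^U$, so the irreducibility of $U$ at the matrix level forces irreducibility of $\textbf{A}^U$, and the same holds for $V$.

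The preceding proposition then says that $\textbf{A}^U$ and $\textbf{A}^V$, being two irreducible infinitesimal representations of the common dimension $n+1$, are unitarily equivalent: there exists a unitary $T \in M_{n+1}(\mathbb{C})$ with $T A_k^U T^* = A_k^V$ for $k = 0, 1, 2$. Rewriting this as $A_k^U T^* = T^* A_k^V$ and invoking the $(1)\Rightarrow(2)$ direction of Proposition~\ref{amatrix} with $T^*$ as the intertwiner, I obtain $U(T^* \otimes I) = (T^* \otimes I) V$, which is the sought equivalence of $U$ and $V$ as FDRs.

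There is no real obstacle left at this stage. All of the substantive work lies upstream: constructing the bimodule $\Gamma$ and the derivation $d$, computing $\ker d$ in Proposition~\ref{fu} to identify matrix intertwiners with infinitesimal intertwiners (Proposition~\ref{amatrix}), and carrying out the $\su$-style weight analysis that classifies the irreducible $\textbf{A}$'s. With those ingredients in place, the corollary collapses to the short chain above; the only \emph{minor} point worth noting is the initial reduction to unitary FDRs, which is routine for compact quantum groups.
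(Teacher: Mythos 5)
Your argument is correct and is exactly the route the paper intends: the corollary is stated without proof precisely because it follows by combining the unitarization of FDRs, the irreducibility transfer and intertwiner dictionary of Proposition~\ref{amatrix}, and the classification of irreducible infinitesimal representations by dimension. Nothing in your chain deviates from the paper's (implicit) reasoning.
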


We are now ready to show that our representations $U_{n}$ from Definition~\ref{un} is actually irreducible. One slight issue is that they are not unitary for any $n>0,$ for example, a calculation yield that 
$$U_{1}=\left[
\begin{array}{cc}
\alpha&   \gamma^{*}\\
-q\gamma & \alpha^{*}\\
\end{array}\right]
=\left[
\begin{array}{cc}
 q^{-\frac{1}{2}}&   0\\
0 & - q^{\frac{1}{2}}\\
\end{array}\right
]\left[
\begin{array}{cc}
\alpha&   -q\gamma^{*}\\
\gamma & \alpha^{*}\\
\end{array}\right]
\left[
\begin{array}{cc}
q^{\frac{1}{2}}&   0\\
0 & -q^{-\frac{1}{2}}\\
\end{array}\right
]
$$
which is not unitary. However, since every FDR $V\in M_{n}(\mathbb{C})\otimes SU_{q}^{0}(2)$ is actually equivalent to an unitary FDR in the sense that there exists a $T\in M_{n}(\mathbb{C})$ such that $(T^{-1}\otimes I)V(T\otimes I)$ is a unitary and then
$$
(I\otimes \chi_{k})((T^{-1}\otimes I)V(T\otimes I))=T^{-1}(I\otimes\chi_{k})(U) T=T^{-1}A^{V}_{k}T.
$$
We see that the system $A^{U}_{0},A^{U}_{1},A^{U}_{2}$ (which satisfies only~\eqref{sys1}) is equivalent to a system satisfying also~\eqref{sys2}. If $A^{U}_{0},A^{U}_{1},A^{U}_{2}$ is irreducible (in the sense that no nontrivial idempotents commutes with it), then obviously so is any equivalent system. Hence to prove that $U$ is irreducible, we only need to prove that the system $A^{U}_{0},A^{U}_{1},A^{U}_{2}$ is irreducible.

\begin{prop}
The FDR $U_{n}\in V_{n}\otimes SU^{0}_{q}(2)$ is irreducible. Furthermore, every irreducible FDR of $SU_{q}(2)$ is equivalent to one of the $U_{n}$'s. 
\end{prop}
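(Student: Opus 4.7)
The plan is to reduce both assertions to the classification of irreducible infinitesimal representations via Proposition~\ref{amatrix}, using only a computation of the spectrum of $A^{U_n}_1$. First I would pass from $U_n$ to an equivalent unitary FDR $\tilde U_n$, whose infinitesimal representation $\{A^{\tilde U_n}_0, A^{\tilde U_n}_1, A^{\tilde U_n}_2\}$ then satisfies both~\eqref{sys1} and~\eqref{sys2}; by Proposition~\ref{amatrix}, $\tilde U_n$ (and hence $U_n$) is irreducible iff this system is irreducible.

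The key calculation is the spectrum of $A^{U_n}_1$. Since $\chi_1=\frac{q^2}{1-q^2}(f_1-\epsilon)$ with $f_1=f_0*f_0$ a character, and $f_0*\alpha=q^{-1}\alpha$, $f_0*\gamma^*=q\gamma^*$, multiplicativity of $f_0*(\cdot)$ on products yields $f_0*e_l=q^{n-2l}e_l$ in the basis $e_l:=\alpha^l\gamma^{*(n-l)}$ of $V_n$. Using $A^{U_n}_k e_l = \chi_k*e_l$, this gives $A^{U_n}_1 e_l = \lambda_l e_l$ with $\lambda_l=\frac{q^2(q^{2(n-2l)}-1)}{1-q^2}=d_{2l-n}$, where $d_k=(q^{-2k}-1)/(q^{-2}-1)$ is the eigenvalue formula from the classification proposition. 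Since $q\in(0,1)$, the map $k\mapsto d_k$ is strictly increasing on $\mathbb Z$, so the eigenvalues of $A^{U_n}_1$ form exactly the set $\{d_{-n},d_{-n+2},\ldots,d_n\}$, each with multiplicity one.

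Now if $\tilde U_n$ were reducible, its unitarity would let me split it as an orthogonal direct sum $V_1\oplus\cdots\oplus V_r$ of irreducible unitary sub-FDRs of dimensions $m_i+1$, with $\sum_i(m_i+1)=n+1$. By Proposition~\ref{amatrix} each $\{A^{V_i}_k\}$ is an irreducible infinitesimal representation satisfying~\eqref{sys1}--\eqref{sys2}, so by the classification proposition the spectrum of $A^{V_i}_1$ is exactly $\{d_{-m_i},d_{-m_i+2},\ldots,d_{m_i}\}$. The multiset union of these equals $\{d_{-n},\ldots,d_n\}$; since $d_n$ is the maximum value, it must lie in some $\{d_{-m_i},\ldots,d_{m_i}\}$, forcing $m_i\geq n$, which together with $m_i+1\leq n+1$ gives $m_i=n$ and $r=1$. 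Thus $\tilde U_n$, and hence $U_n$, is irreducible.

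For exhaustion, let $V$ be any irreducible FDR of dimension $m+1$ and pick an equivalent unitary $\tilde V$. Its infinitesimal representation is irreducible and satisfies~\eqref{sys1}--\eqref{sys2}, so by the classification proposition it is unitarily equivalent to that of $\tilde U_m$ via some intertwiner $T$. Proposition~\ref{amatrix} then lifts $T$ to an equivalence $\tilde V\simeq \tilde U_m$, whence $V\simeq U_m$. The main obstacle is the eigenvalue computation in the second paragraph: one must check that $f_0*(\cdot)$ is multiplicative (which follows because $f_0$ is a character, so $f_0(b_{(2)}c_{(2)})=f_0(b_{(2)})f_0(c_{(2)})$) and trace its action on the basis $\{e_l\}$ carefully; everything else is bookkeeping around the classification and Proposition~\ref{amatrix}.
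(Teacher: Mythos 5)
Your irreducibility argument is a genuinely different route from the paper's, and it works. The paper computes all three matrices $A^{U_n}_0, A^{U_n}_1, A^{U_n}_2$ explicitly (the $A_0$ computation via the iterated derivation formula is the longest calculation in that proof) and then argues directly that no nontrivial subspace is invariant, using that $A_1$ has simple spectrum and that $A_0, A_2$ act as shifts between the eigenlines. You instead compute only the spectrum of $A^{U_n}_1$ (your computation $f_0*e_l=q^{n-2l}e_l$, hence $A^{U_n}_1e_l=d_{2l-n}e_l$ with $d_k$ strictly increasing, is correct) and rule out a decomposition by a multiset comparison with the spectra $\{d_{-m_i},\dots,d_{m_i}\}$ forced by the classification proposition. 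This is cleaner and avoids the $\chi_0$ computation entirely, at the price of leaning harder on the classification proposition, which this paper only cites from Woronowicz rather than proves; the paper's direct computation of $A_0$ and $A_2$ is self-contained on that point.

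There is, however, a genuine gap in your exhaustion step. The proposition asserts that every irreducible FDR of $SU_q(2)$ --- the C$^*$-algebraic compact quantum group --- is equivalent to some $U_n$, whereas your argument only covers irreducible FDRs whose matrix coefficients lie in the Hopf $*$-algebra $SU^0_q(2)$. For a general $V=(v_{j,k})$ with $v_{j,k}\in SU_q(2)$, the functionals $\chi_k$ and $f_k$ are not defined on the coefficients, so the ``infinitesimal representation'' $(\iota\otimes\chi_k)V$ that your argument feeds into the classification proposition does not yet exist; it is not automatic that the span of coefficients of FDRs of the CQG coincides with $SU^0_q(2)$. The paper closes exactly this hole in its final paragraph: it observes that the coefficients of tensor powers of $U_1$ already span $SU^0_q(2)$, which is dense in $SU_q(2)$, so if $V$ were an irreducible unitary FDR not equivalent to any $U_n$, the orthogonality relations for the Haar state $h$ would force $h(v_{j,l}v_{j,l}^*)=0$, a contradiction. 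You need this (or some equivalent density/orthogonality argument) to upgrade your conclusion from ``every irreducible FDR with coefficients in $SU^0_q(2)$'' to ``every irreducible FDR of $SU_q(2)$.''
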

\begin{proof}
We try to calculate $\chi_{0}*\alpha^{k}\gamma^{*(n-k)}.$ By formula~\eqref{deriprod}, for $0\leq k\leq n$
$$\sum_{j=1}^{k}\alpha^{j-1}\chi_{0}(\alpha)f_{0}*(\alpha^{k-j}\gamma^{*(m-k)})+\sum_{j=1}^{n-k}\alpha^{k}\gamma^{*(j-1)}\chi_{0}(\gamma^{*})f_{0}*(\gamma^{*(n-k-j)})=$$
$$
\sum_{j=1}^{n-k}\alpha^{k}\gamma^{*(j-1)}\chi_{0}(\gamma^{*})f_{0}*(\gamma^{*(n-k-j)})=\sum_{j=1}^{n-k}\alpha^{k}\gamma^{*(j-1)}(-q^{-1}\alpha)q^{n-k-j}\gamma^{*(n-k-j)}=
$$
$$
-\left(\sum_{j=1}^{n-k}q^{-(j-1)}(q^{-1})q^{n-k-j}\right)\alpha^{k+1}\gamma^{*(n-k-1)}=-q^{n-k-2}\left(\sum_{j=0}^{n-k-1}q^{-2j}\right)\alpha^{k+1}\gamma^{*(n-k-1)}=
$$
$$
-q^{n-k-2}\frac{1-q^{-2(n-k)}}{1-q^{-2}}\alpha^{k+1}\gamma^{*(n-k-1)}.
$$
As we also have $\chi_{0}*(\alpha^{k}\gamma^{*(n-k)})=\sum_{j=0}^{n}\alpha^{j}\gamma^{*(n-j)}\otimes \chi_{0}(u_{j,k}),$ the matrix $A_{0}^{U_{n}}$ has in the basis $\{\gamma^{*n},\alpha\gamma^{*(n-1)},...\alpha^{n-1}\gamma^{*},\alpha^{n}\}$  has the value $-q^{n-k-2}\frac{1-q^{-2(n-k)}}{1-q^{-2}}$ at the $(k+1,k)$ entry and the rest of the entries are zero. A similar calculation gives that an entry of $A_{2}^{U}$ is $\neq 0$ iff its index is in the form $(k,k+1).$ Finally, by using $\chi_{1}=\frac{q}{1-q^{2}}(f_{1}-\epsilon),$ we see that $A_{1}^{U_{n}}$ is diagonal with diagonal entries $\frac{q^{2}(q^{2(n-2k)}-1)}{1-q^{2}}.$ As $\frac{q^{2}(q^{2(n-2k)}-1)}{1-q^{2}}\neq\frac{q^{2}(q^{2(n-2j)}-1)}{1-q^{2}}$ for $k\neq j,$ the matrix $A^{U_{n}}_{1}$ separates the eigenvectors $\alpha^{k}\gamma^{*(n-k)},$ while $A^{U_{n}}_{0}$ maps $\alpha^{k}\gamma^{*(n-k)}$ to a nonzero multiple of $\alpha^{k+1}\gamma^{*(n-k-1)}$ for $0\leq k<n$ and $A_{2}^{U_{n}}$ maps $\alpha^{k}\gamma^{*(n-k)}$ to a nonzero multiple of $\alpha^{k-1}\gamma^{*(n-k+1)}$ for $0< k\leq n.$ It is now easy to see that no nontrivial subspace is invariant under all these matrices. This is equivalent to the irreducibility condition defined above.
\\

Since there is up to equivalence only one irreducible representation for each dimension $n,$ we get that any irreducible FDR of $SU^{0}_{q}(2)$ most be equivalent one of the $U_{n}$'s.
\\

There is now still the matter if there would be an irreducible FDR $V=(v_{j,k})$ of $SU_{q}(2)$ (the CQG) that was not equivalent to any one of the $U_{n}$'s. However, by taking tensor products of $U_{1},$ we see that the linear span of coefficients of FDR's of $SU_{q}^{0}(2)$ span $SU_{q}^{0}(2)$ and by the theory of CQG's this will imply that $h(v_{j,l}A_{k,m,n})=0$ for all $k\in \mathbb{Z},m,n\in\mathbb{N}$ (where $h$ is the Haar state on $SU_{q}(2)$) and hence $h(v_{j,l}v^{*}_{j,l})=0$ as $SU_{q}^{0}(2)$ is dense in $SU_{q}(2),$ giving us a contradiction.
\end{proof}

There is only the last claim in the statement of Theorem~\ref{woroz} left to prove, which is that a FDR $V$ is up to equivalent determined by its weight function. To this end, define $M_{V}(t)=\sum_{k=-\infty}^{\infty}M_{V}(k)e^{i k t}$ and consider $\sin(t)M_{V}(t).$ An easy calculation using~\eqref{wf} gives that $\sin(t)M_{(n)}(t)=\sin((n+1)t)$ and hence we can determine the multiplicity of $U_{(n)}$ in the decomposition of $V$ into irreducible factors by consider $\frac{1}{2\pi}\int_{-\pi}^{\pi}(\sin{t})^{2}M_{V}(t)M_{(n)}(t)dt.$
\end{subsection}


\begin{thebibliography}{99}
\bibitem{wor} {\sc S.L. Woronowizs}, {\it Twisted $SU(2)$ Group. An Example of a
Non-Commutative Differential Calculus}, {\rm {\rm Publ. Res. Inst. Math. Sci. \textbf{23} (1987), no. 1, 117–181}}
\end{thebibliography}
\end{document}